\newtheorem{theorem}{Theorem}[section]
\newtheorem{corollary}[theorem]{Corollary}
\newtheorem{proposition}[theorem]{Proposition}
\newtheorem{lemma}[theorem]{Lemma}
\theoremstyle{definition}
\newtheorem{conjecture}[theorem]{Conjecture}
\theoremstyle{remark}
\newtheorem{remark}[theorem]{Remark}
\def\Z{\text{\bf Z}}
\def\Q{\text{\bf Q}}
\def\R{\text{\bf R}}
\def\C{\text{\bf C}}
\def\P{\text{\bf P}}
\def\arrow{\rightarrow}
\def\inj{\hookrightarrow}
\DeclareMathOperator{\Hom}{Hom}
\DeclareMathOperator{\Hilb}{\mathrm{Hilb}}
\def\Sm{\mathrm{Sm}}
\def\Gr{\mathrm{Gr}}
\DeclareMathOperator{\ord}{ord}
\DeclareMathOperator{\inter}{int}
\DeclareMathOperator{\Sper}{Sper}
\DeclareMathOperator{\Spc}{Spc}
\DeclareMathOperator{\BGL}{BGL}
\DeclareMathOperator{\GL}{GL}
\title{Torus actions, Morse homology,
and the Hilbert scheme of points on affine space}
\author{Burt Totaro}
\address{UCLA Mathematics Department, Box 951555,
Los Angeles, CA 90095-1555}
\email{totaro@math.ucla.edu}
\begin{document}


\removeabove{0,3cm}
\removebetween{0,3cm}
\removebelow{0,3cm}

\maketitle

\begin{prelims}

\DisplayAbstractInEnglish

\bigskip

\DisplayKeyWords

\medskip

\DisplayMSCclass

\bigskip

\languagesection{Fran\c{c}ais}

\bigskip

\DisplayTitleInFrench

\medskip

\DisplayAbstractInFrench

\end{prelims}


\newpage

\setcounter{tocdepth}{1} 

\tableofcontents


\section{Introduction}

We formulate a conjecture on actions of the multiplicative group
$\mathbf{G}_m$ in algebraic geometry. (Over the complex numbers,
this group may be called $\C^*$.) In short, if $\mathbf{G}_m$ acts
on a quasi-projective scheme $U$ which is
attracted as $t\to 0$ in $\mathbf{G}_m$
to a closed subset $Y$ in $U$, then the inclusion $Y\to U$ should be
an $\mathbf{A}^1$-homotopy equivalence (Conjecture \ref{conj:torus}).
This is not obvious, in that
the action of $\mathbf{G}_m$ on $U$
usually does not extend to a morphism $\mathbf{A}^1\times U
\arrow U$; compare Figure \ref{torusaction}.
We show that the inclusion $Y\to U$ over the complex numbers
is at least a homotopy equivalence in the classical topology
(Theorem \ref{complex}). This extends work of Hausel and Rodriguez-Villegas
on the case where $U$ is smooth \cite[Corollary 1.3.6]{HR}.
We prove several other results
in the direction of the conjecture, including
a homotopy equivalence on real points (Theorem \ref{real})
and, when $U$ is smooth,
an $\mathbf{A}^1$-homotopy equivalence after a suitable suspension (Theorem
\ref{smooth}). The proofs use the ideas
of Morse homology, translated into algebraic geometry
(Proposition \ref{broken}).
\begin{figure}[b]\centering
\begin{tikzpicture}[xscale=1, yscale=1, >=stealth']
\draw (-2,0) -- (-.5,0);
\draw[<->] (-.5,0) -- (0.5,0);
\draw (0.5,0) -- (2,0);
\draw[->] (3,0) -- (2,0);
\draw[->] (0,2) -- (0,1);
\draw (0,1) -- (0,-1);
\draw[->] (0,-2) -- (0,-1);
\draw[->] (1,2) -- (1,1);
\draw (1,1) -- (1,-1);
\draw[->] (1,-2) -- (1,-1);
\draw[domain=-2:-1,variable=\x] plot({\x},{(-0.5)*(\x - 1)/(\x)});
\draw[<-][domain=-1:-0.3333,variable=\x] plot({\x},{(-0.5)*(\x - 1)/(\x)});
\draw[domain=-2:-1,variable=\x] plot({\x},{(0.5)*(\x - 1)/(\x)});
\draw[<-][domain=-1:-0.3333,variable=\x] plot({\x},{(0.5)*(\x - 1)/(\x)});
\draw[->][domain=0.5:8/11,variable=\x] plot({\x},{(2)*(\x - 1)/(\x)});
\draw[domain=8/11:1.5,variable=\x] plot({\x},{(2)*(\x - 1)/(\x)});
\draw[->][domain=3:1.5,variable=\x] plot({\x},{(2)*(\x - 1)/(\x)});
\draw[->][domain=0.5:8/11,variable=\x] plot({\x},{(-2)*(\x - 1)/(\x)});
\draw[->][domain=3:1.5,variable=\x] plot({\x},{(-2)*(\x - 1)/(\x)});
\draw[domain=8/11:1.5,variable=\x] plot({\x},{(-2)*(\x - 1)/(\x)});
\draw[domain=-2:-1/2,variable=\x] plot({\x},{(-1/8)*(\x - 1)/(\x)});
\draw[<-][domain=-1/2:-1/15,variable=\x] plot({\x},{(-1/8)*(\x - 1)/(\x)});
\draw[domain=-2:-1/2,variable=\x] plot({\x},{(1/8)*(\x - 1)/(\x)});
\draw[<-][domain=-1/2:-1/15,variable=\x] plot({\x},{(1/8)*(\x - 1)/(\x)});
\draw[->][domain=1/9:0.4,variable=\x] plot({\x},{(-1/4)*(\x - 1)/(\x)});
\draw[->][domain=3:1.9,variable=\x] plot({\x},{(-1/4)*(\x - 1)/(\x)});
\draw[domain=0.4:1.9,variable=\x] plot({\x},{(-1/4)*(\x - 1)/(\x)});
\draw[->][domain=1/9:0.4,variable=\x] plot({\x},{(1/4)*(\x - 1)/(\x)});
\draw[->][domain=3:1.9,variable=\x] plot({\x},{(1/4)*(\x - 1)/(\x)});
\draw[domain=0.4:1.9,variable=\x] plot({\x},{(1/4)*(\x - 1)/(\x)});
\node[left] at (-2,0) {Y};
\node[left] at (-2,1.5) {U};
\draw[fill] (0,0) circle[radius=0.05];
\draw[fill] (1,0) circle[radius=0.05];
\end{tikzpicture}
\caption{Example of a $T$-action on $U\cong \P^1\times \mathbf{A}^1$,
$t([x_0,x_1],y)=([x_0,tx_1],ty)$,
with $Y=\P^1\times 0$ shown
as the horizontal line ($T=\mathbf{G}_m$). The arrows point
in the direction $t\to 0$. The fixed point set $Y^T$
consists of two points.}
\label{torusaction}
\end{figure}
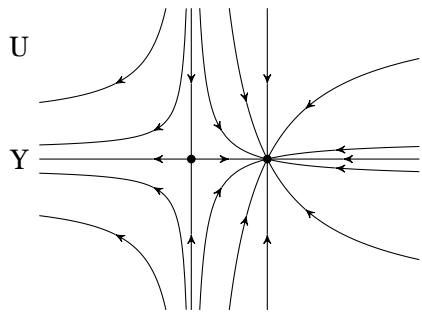

We apply these results to the Hilbert scheme of points
on affine space.
The Hilbert scheme of points on an algebraic surface is smooth,
and its Betti numbers were computed by G\"ottsche 
\cite{GoettscheICM}. The Hilbert scheme of points on a higher-dimensional
variety, even affine space $\mathbf{A}^n$,
is more mysterious. It has many irreducible components
\cite{Iarrobino},
and for $n\geq 16$ its singularities satisfy Murphy's law up to retraction
\cite{Jelisiejew}. Nonetheless, progress was recently made
toward understanding the homotopy type
(and even the $\mathbf{A}^1$-homotopy type) of $\Hilb_d(\mathbf{A}^n)$ for $n$ large
compared to $d$. In particular, in the limit where $n$ goes
to infinity, $\Hilb_d(\mathbf{A}^{\infty})$ has the $\mathbf{A}^1$-homotopy type
of the infinite
Grassmannian $\Gr_{d-1}(\mathbf{A}^{\infty})\simeq \BGL(d-1)$ \cite{HJNTY}.
There are also corresponding stability theorems.
In particular, over the complex numbers, the resulting homomorphism
on integral cohomology,
$$H^*(\BGL(d-1,\C),\Z)=\Z[c_1,\ldots,c_{d-1}]\arrow H^*(\Hilb_d(\mathbf{A}^n),\Z),$$
is an isomorphism in degrees at most $2n-2d+2$ \cite{HJNTY}.

This paper considers another homotopical property
of the Hilbert scheme $\Hilb_d(\mathbf{A}^n)$ for finite $n$.
Namely, over the complex numbers, we show that
$\Hilb_d(\mathbf{A}^n)$ (in the classical topology)
has the homotopy type of $\Hilb_d(\mathbf{A}^n,0)$, the (compact) subspace
of schemes supported at the origin (Corollary
\ref{move-to-origin}).
This result is deduced from Theorem
\ref{complex} on the topology of $\mathbf{G}_m$-actions.
For example, it follows that the weight filtration on the rational
cohomology $H^i(\Hilb_d(\mathbf{A}^n),\Q)$ is concentrated in weights $\leq i$,
since that holds for proper schemes over $\C$ \cite{Deligne}.

It remains open whether $\Hilb_d(\mathbf{A}^n,0)$
is $\mathbf{A}^1$-homotopy equivalent to $\Hilb_d(\mathbf{A}^n)$, over the complex numbers
or any other field. This would follow from our general
Conjecture \ref{conj:torus}. We can say something
about the unstable $\mathbf{A}^1$-homotopy type of these spaces,
namely that $\Hilb_d(\mathbf{A}^n,0)$ and $\Hilb_d(\mathbf{A}^n)$
are $\mathbf{A}^1$-connected (Theorems \ref{hilbconnected}
and \ref{hilbzeroconnected}).

As a tool, we extend one of Bachmann's conservativity
theorems, relating the motivic stable homotopy category
to the derived category of motives along with real realizations
(Theorem \ref{conservativity}).

\subsection*{Acknowledgments}
Thanks to Tom Bachmann, David Hemminger, Marc Hoyois, Joachim Jelisiejew,
Denis Nardin, Maria Yakerson, and the referee
for their suggestions.

\section{Main results,
and a conjecture on $\mathbf{G}_m$-actions in motivic homotopy theory}
\label{section:main}

In this section, we formulate
a general conjecture about actions of the multiplicative
group $T=\mathbf{G}_m$ in motivic homotopy theory.
(For motivic homotopy theory as defined by Morel and Voevodsky,
a reference is \cite{MV}
and an introduction is \cite{AE}.) Roughly, if $T$ acts
on a quasi-projective scheme $U$ which is
attracted as $t\to 0$ in $T$
to a closed subset $Y$ in $U$, then the inclusion $Y\to U$ should be
an $\mathbf{A}^1$-homotopy equivalence (Conjecture \ref{conj:torus}).
We show that (over the complex numbers) the inclusion $Y\to U$
is at least a homotopy equivalence in the classical topology
(Theorem \ref{complex}, proved in Section \ref{section:proof}).

Let $\Hilb_d(\mathbf{A}^n)$ be the quasi-projective scheme
of zero-dimensional degree-$d$ closed subschemes
of affine space $\mathbf{A}^n$ over a field $k$.
When $k$ is the complex numbers, we deduce from Theorem \ref{complex}
that $\Hilb_d(\mathbf{A}^n)$
has the homotopy type of $\Hilb_d(\mathbf{A}^n,0)$, the (compact) subspace
of schemes supported at the origin (Corollary
\ref{move-to-origin}).

Here is our general conjecture on actions of the multiplicative
group.
Let $X$ be a projective scheme over a field $k$
with an action of $T=\mathbf{G}_m$.
Suppose that there is a $T$-equivariant ample line bundle
on $X$. Let $Y$
be a $T$-invariant closed subset of $X$ 
such that every point $x$ in $X$ with
$\lim_{t \to \infty}(tx)\in Y$ is in $Y$. Suppose
that the fixed point set $Y^T$ is open in $X^T$.
Let $U$ be the subset of points $x$ in $X$
such that $\lim_{t \to 0}(tx)$ is in $Y$. 
We show in Lemma \ref{lemma:open} that $Y$ is contained in $U$ and
$U$ is open in $X$. 

\begin{conjecture}\label{conj:torus}
The inclusion $Y\to U$ is an $\mathbf{A}^1$-homotopy equivalence (that is,
an isomorphism in the $\mathbf{A}^1$-homotopy category $H(k)$).
\end{conjecture}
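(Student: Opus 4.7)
The natural approach is to factor $i\colon Y\to U$ through an attractor scheme. Using the $T$-equivariant ample line bundle, choose a $T$-equivariant closed embedding $X\hookrightarrow \mathbf{P}(V)$ for a finite-dimensional $T$-representation $V$. Define $U^+$ as the scheme representing $T$-equivariant morphisms $\mathbf{A}^1\times S\to X$ (with $T$ acting by scaling on $\mathbf{A}^1$) whose restriction to $\{1\}\times S$ factors through $U$; representability follows by comparison with the explicit attractor of $\mathbf{P}(V)$, built from the weight-space decomposition of $V$. There are natural morphisms $\iota\colon Y\to U^+$ (constant trajectories at $y\in Y$), $ev_0\colon U^+\to Y$ (limit at $t=0$), and $ev_1\colon U^+\to U$ (value at $t=1$), with $ev_0\circ\iota=\mathrm{id}_Y$ and $ev_1\circ\iota=i$.

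The scheme $U^+$ carries a canonical action of the multiplicative monoid $(\mathbf{A}^1,\cdot)$: given a trajectory $\phi$ and $s\in\mathbf{A}^1$, rescale to $\phi_s(t)=\phi(st)$. This map $\mathbf{A}^1\times U^+\to U^+$ is the identity at $s=1$ and equals $\iota\circ ev_0$ at $s=0$. Hence $\iota\colon Y\to U^+$ is an $\mathbf{A}^1$-homotopy equivalence with inverse $ev_0$, and the problem reduces to proving $ev_1\colon U^+\to U$ is an $\mathbf{A}^1$-homotopy equivalence. The hypotheses on $Y$ (that $Y^T$ is open in $X^T$, and that $Y$ is stable under repelling trajectories) ensure $ev_1$ is at least a bijection on field-valued points: every $x\in U$ has a unique attracting trajectory $t\mapsto tx$ lying entirely in $U$, and conversely. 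In the smooth case, the Bialynicki-Birula decomposition upgrades this to an isomorphism of schemes and realizes $U$ as a Zariski-locally-trivial affine bundle over $Y$, which settles the smooth case directly.

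The main obstacle is the singular case, where $ev_1\colon U^+\to U$ is generally not an isomorphism of schemes, being at best a universal homeomorphism in favorable circumstances. Showing that a universal homeomorphism is an $\mathbf{A}^1$-weak equivalence in $H(k)$ is characteristic-sensitive, requiring cdh-descent in characteristic zero and inversion of the exponential characteristic in general. I expect this is precisely why the paper proves only partial results: the homotopy equivalence on classical $\mathbf{C}$- and $\mathbf{R}$-points (where universal homeomorphisms become homeomorphisms of topological spaces) and an $\mathbf{A}^1$-equivalence after suitable suspension in the smooth case. A full proof of Conjecture~\ref{conj:torus} likely requires either a new $\mathbf{A}^1$-invariance principle for universal homeomorphisms, or a construction of the retraction and homotopy on $U$ itself that bypasses $U^+$, for instance via a birational modification of $\mathbf{A}^1\times U$ along the indeterminacy locus of the naive action map that is itself $\mathbf{A}^1$-contractible over its image.
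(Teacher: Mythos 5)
This statement is an open conjecture; the paper does not prove it, and you correctly recognize this and correctly identify the central obstruction (the attractor $U^+$ carries the desired $\mathbf{A}^1$-monoid action, but $ev_1\colon U^+\to U$ is not an isomorphism). Indeed the paper makes exactly this observation in Section~\ref{section:main}, citing Drinfeld, and concludes ``that does not obviously help, because the morphism $U^+\to U$ is usually not a homotopy equivalence.''

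However, two concrete claims in your proposal are wrong. First, the smooth case is \emph{not} settled directly by the Bia\l ynicki-Birula decomposition, and $ev_1\colon U^+\to U$ is \emph{not} an isomorphism of schemes when $U$ is smooth. Already for $X=U=\P^1$ with the standard $\mathbf{G}_m$-action and $Y=\P^1$, the attractor is $U^+\cong \mathbf{A}^1\sqcup\{\infty\}$ and $ev_1$ is a continuous bijection but not an isomorphism (this is Drinfeld's own example). More generally, for smooth $X$ the attractor is the \emph{disjoint} union of the BB cells, so $ev_1$ separates the cells that $U$ glues together. Relatedly, $U$ is not an affine bundle over $Y$: there is generally no algebraic retraction $U\to Y$, precisely because the map $x\mapsto \lim_{t\to 0}tx$ fails to be a morphism. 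This is why Theorem~\ref{smooth} only obtains an $\mathbf{A}^1$-equivalence after $\mathbf{S}^{3,1}$-suspension, via an argument that has nothing to do with affine bundle structures: it combines Karpenko's motivic decomposition (to compare $M(Y)$ and $M(U)$ in $DM(k)$) with the conservativity Theorem~\ref{conservativity} extending Bachmann, plus the real-realization result Theorem~\ref{real}, plus Bachmann's unstable $\P^1$-conservativity.

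Second, because $ev_1$ is not a universal homeomorphism (the $\P^1$ example again: source disconnected, target connected), your explanation of the real and complex cases as ``universal homeomorphisms become homeomorphisms on points'' does not apply. The paper's proof of Theorem~\ref{complex} bypasses $U^+$ entirely: it triangulates $X$ with $Y$ as a subcomplex, takes a compact regular neighborhood $N$ of $Y$ in $U$, and uses the broken-trajectory result (Proposition~\ref{broken} and Lemma~\ref{zerolimit}) to show that the flow eventually pushes any compact subset of $U$ into $N$, so $U$ is an increasing union of copies of $N$ each including into the next by a homotopy equivalence. This is a genuinely Morse-theoretic argument and is not a corollary of any attractor-scheme statement.
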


The assumption that $X$ has a $T$-equivariant ample
line bundle is automatic if $X$ is normal
\cite[Theorem 1.6]{Sumihiro}.

Over the complex numbers, the proof
of Theorem \ref{complex} shows that $Y$ is an
{\it attracting set }for the $T$-action on $X$,
in the terminology of topological dynamics,
and $U$ is the {\it basin of attraction }for $Y$
\cite[section 1]{Milnor}. To say that $Y$ is an attracting set means
that there is a neighborhood $N_1$ of $Y$ (in the classical topology)
for which the images
$t(N_1)$ converge to $Y$, meaning that for every
neighborhood $N_2$ of $Y$, there is an $r>0$ such that
$t(N_1)\subset N_2$ for all $t\in \C^*$ with $|t|<r$.

The conjecture would be useful for motivic homotopy
theory, since $\mathbf{G}_m$-actions occur everywhere. When $U$ is smooth,
both $Y$ and $U$ are unions of affine bundles over the connected components
of $Y^T$, by Bia\l ynicki-Birula \cite{BB}. But even then,
we only know how to prove that $Y\to U$ is an $\mathbf{A}^1$-homotopy equivalence
after a suitable suspension (Theorem \ref{smooth}).
Regardless of whether $U$ is smooth,
the conjecture would be clear if the $T$-action on $U$
extended to a morphism
$$\mathbf{A}^1\times U\to U,$$
since $0\times U$ would map into $Y$;
but in general there is no such morphism.
Even the $T$-action on $Y$ need not extend
to a morphism $\mathbf{A}^1\times Y\to Y$:
consider the case where $Y$ is $\P^1$ with the
standard action of $T$, where $\lim_{t\to 0}tx=0$ if $x\neq\infty$
but $\lim_{t\to 0}t(\infty)=\infty$.

Another way to describe the same situation is Drinfeld's
analog of the Bia\l ynicki-Birula decomposition
for singular varieties, although we will
not use that explicitly in what follows. Namely, Drinfeld
defines an algebraic space $Y^+$, the ``attractor'' of $Y$,
as the space of $T$-equivariant morphisms
$\mathbf{A}^1\to Y$; roughly speaking,
a point of $Y^+$ is a point $x$ of $Y$ together
with a limit point $\lim_{t\to 0}tx$. Drinfeld shows
that $Y^+\to Y$ is bijective for $Y$ proper over $k$,
although usually not an isomorphism \cite[Proposition 1.4.11]{Drinfeld}.
For example, if $Y=\P^1$ with the standard $T$-action,
then the space $Y^+$ is the disjoint union of $\mathbf{A}^1$ and the point
at infinity. In a sense, the difficulty for Conjecture \ref{conj:torus}
is that the action of $T$ on $U$ does not extend to an action
of the multiplicative monoid $\mathbf{A}^1$. The action of $T$ on $U^+$ does
extend to an action of $\mathbf{A}^1$; but that does not obviously help,
because the morphism $U^+\to U$ is usually not a homotopy equivalence.

As evidence for Conjecture \ref{conj:torus} in the singular case,
we prove the following weaker statement in section
\ref{section:proof}. Theorem \ref{complex}
was proved in the case where $U$ is smooth by Hausel
and Rodriguez-Villegas \cite[Corollary~1.3.6]{HR}.

\begin{theorem}\label{complex}
Under the assumptions of Conjecture \ref{conj:torus}
with base field $\C$,
the inclusion $Y\to U$ is a homotopy equivalence
$($in the classical topology$)$.
\end{theorem}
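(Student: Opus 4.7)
The plan is to use the moment map for an equivariant projective embedding as a Lyapunov function for the real subgroup $\R_{>0} \subset T$, and to construct an explicit homotopy equivalence $U \simeq Y$ from the resulting flow.

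\emph{Step 1 (Lyapunov function).} Using the $T$-equivariant ample line bundle on $X$, embed $X$ equivariantly in some $\P^N$ so that $T$ acts on $\C^{N+1}$ with integer weights $w_0, \ldots, w_N$. The moment map for the $S^1$-action with respect to the Fubini--Study Kähler form is the continuous function
\[
\mu([z_0:\cdots:z_N]) = \frac{\sum_i w_i |z_i|^2}{\sum_i |z_i|^2},
\]
whose negative Fubini--Study gradient generates the real $\R_{>0}$-action on $\P^N$. Restricted to the compact set $X$, $\mu$ is strictly monotonic along every non-fixed $\R_{>0}$-orbit, with $\lim_{t \to 0^+} \mu(tx)$ equal to the largest weight appearing in a lift of $x$ to $\C^{N+1}$. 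Hence $\mu$ serves as a Lyapunov function: if $\lim_{t \to 0^+} tx = p$, then for every neighborhood $V$ of $p$ the orbit satisfies $tx \in V$ for all sufficiently small $t > 0$.

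\emph{Step 2 (Continuous retraction).} Define $r \colon U \to Y$ by $r(x) = \lim_{t \to 0^+} tx$; by the defining property of $U$ this lies in $Y \cap X^T = Y^T$. The heart of the argument is that $r$ is continuous. Fix $x_0 \in U$, let $Z$ be the connected component of $X^T$ containing $r(x_0)$, and note that $Z \subset Y^T$ since $Y^T$ is open in $X^T$; in particular $Z$ lies at positive distance from every other component of the compact set $X^T$. Combining the Lyapunov property with continuity of the $T$-action, one shows that for $x$ near $x_0$ the whole tail $\{tx : 0 < t \le t_0\}$ is trapped in a prescribed neighborhood of $Z$; since any orbit closure meets $X^T$ and no competing fixed component lies nearby, this forces $r(x) \in Z$ and $r(x) \to r(x_0)$ as $x \to x_0$.

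\emph{Step 3 (Homotopy).} Set $\Phi \colon U \times [0,1] \to U$ by $\Phi(x, s) = sx$ for $s > 0$ and $\Phi(x, 0) = r(x)$; this is well-defined because $U$ is $T$-invariant, and continuity at $s = 0$ is the same uniform-convergence estimate as continuity of $r$. Then $\Phi$ is a homotopy from $\text{id}_U$ to $i \circ r$, with $i \colon Y \hookrightarrow U$. Since $Y$ is itself $T$-invariant, $\Phi$ restricts to a homotopy $Y \times [0,1] \to Y$ between $\text{id}_Y$ and $r \circ i$, so $i$ and $r$ are mutually inverse homotopy equivalences. The main obstacle throughout is Step 2: in the smooth case the continuity of $r$ is immediate from the Bia\l ynicki-Birula decomposition, but for singular $X$ one is forced to argue topologically via $\mu$, essentially the Morse-theoretic viewpoint that Proposition \ref{broken} is designed to formalize.
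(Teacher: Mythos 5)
The map $r(x)=\lim_{t\to 0^+}tx$ is \emph{not} continuous in general, even when $X$ is smooth. The paper warns about exactly this just after stating Conjecture~\ref{conj:torus}: on $\P^1$ with the standard action, $\lim_{t\to 0}tx=0$ for all $x\neq\infty$ but $\lim_{t\to 0}t(\infty)=\infty$, so $r$ jumps at $\infty$. For a non-fixed point of discontinuity, take $X=\P^2$ with $t[x_0:x_1:x_2]=[x_0:tx_1:t^2x_2]$, $Y=\{x_2=0\}$, $U=\P^2\setminus\{[0:0:1]\}$. Then $r([a:b:c])=[1:0:0]$ if $a\neq 0$ and $r([0:b:c])=[0:1:0]$ if $b\neq 0$, so $r$ is discontinuous at $x_0=[0:1:1]$. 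Your trapping claim (``for $x$ near $x_0$ the whole tail $\{tx: 0<t\le t_0\}$ is trapped near $Z$'') fails here: for $x=[\epsilon:1:1]$ the orbit $tx=[\epsilon:t:t^2]$ dwells near $[0:1:0]$ only while $\epsilon\ll t\ll 1$, and then escapes to $[1:0:0]$ as $t\ll\epsilon$. The Lyapunov function $\mu$ is indeed monotone along each orbit, but monotonicity only controls where the orbit ends up relative to level sets of $\mu$ --- it does not prevent the orbit from passing \emph{through} a neighborhood of $Z$ and continuing down to a lower critical component. This is the ``broken trajectory'' phenomenon that Proposition~\ref{broken} makes precise. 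Your aside that ``in the smooth case the continuity of $r$ is immediate from the Bia\l ynicki-Birula decomposition'' is also mistaken: the BB decomposition makes $r$ continuous (even algebraic) on each cell $Z_i^+$, not across cells.

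Because $r$ is discontinuous, Step 3 collapses: $\Phi$ is not continuous at $s=0$, so it is not a homotopy, and there is no direct deformation retraction of $U$ onto $Y$. The paper's actual argument is deliberately indirect for this reason. It first proves (Lemma~\ref{zerolimit}) a careful statement about limit points of sequences $t_i(q_i)$ with $q_i\to w\in Y^T$ and $t_i\to 0$: any such limit lies on a broken trajectory below $w$, hence (by the hypothesis that $Y$ is closed under $\lim_{t\to\infty}$) lies in $Y$. From this it deduces that every compact $K\subset U$ satisfies $K\subset a^{-1}(N)$ for small $a$, where $N$ is a compact regular neighborhood of $Y$ (which exists by semialgebraic triangulation, and for which $Y\hookrightarrow N$ is a homotopy equivalence). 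Then $U=\bigcup_j c^j(N)$ is a nested union of copies of $N$, each inclusion is a homotopy equivalence, and $Y\hookrightarrow U$ is a homotopy equivalence by passing to the (filtered) colimit. So the two proofs are not variants of the same idea: yours asserts a strong deformation retraction that does not exist, while the paper constructs a homotopy equivalence without ever producing a continuous retraction $U\to Y$. Your Step 1 (moment map as Lyapunov function) is fine and morally close to the paper's use of the weight filtration in Proposition~\ref{broken}, but it cannot be leveraged the way Step 2 proposes.
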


\begin{corollary}\label{move-to-origin}
Over the complex numbers, the inclusion
from $\Hilb_d(\mathbf{A}^n,0)$ to $\Hilb_d(\mathbf{A}^n)$ $($in the classical topology$)$
is a homotopy equivalence.
\end{corollary}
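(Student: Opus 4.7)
The plan is to realize Corollary \ref{move-to-origin} as a direct instance of Theorem \ref{complex}, by choosing an appropriate projective compactification of $\Hilb_d(\mathbf{A}^n)$ together with a $\mathbf{G}_m$-action whose attracting basin is exactly $\Hilb_d(\mathbf{A}^n)$ and whose target is exactly $\Hilb_d(\mathbf{A}^n,0)$. The natural candidate is the Hilbert scheme $X = \Hilb_d(\mathbf{P}^n)$, compactifying via the open embedding $\mathbf{A}^n \inj \mathbf{P}^n$ with complement the hyperplane $H$ at infinity.

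Let $T = \mathbf{G}_m$ act on $\mathbf{P}^n$ by $t\cdot [x_0 : x_1 : \cdots : x_n] = [x_0 : tx_1 : \cdots : tx_n]$, so that on $\mathbf{A}^n = \{x_0 \neq 0\}$ this is the usual scaling action with the origin as unique $T$-fixed point, while every point of $H$ is $T$-fixed. The induced action on $X = \Hilb_d(\mathbf{P}^n)$ is algebraic, and since $T$ acts linearly on $\mathbf{P}^n$, the Plücker-type ample line bundle on $X$ is naturally $T$-equivariant. Take $Y = \Hilb_d(\mathbf{A}^n,0) \subset X$, the (projective) locus of length-$d$ subschemes set-theoretically supported at the origin; this is $T$-invariant and closed in $X$ (for example, because it is proper and $X$ is separated, or equivalently as the preimage of $d\cdot[0]$ under the Hilbert--Chow morphism composed with the closed embedding $\mathrm{Sym}^d(\mathbf{A}^n) \inj \mathrm{Sym}^d(\mathbf{P}^n)$).

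Next I would verify the hypotheses of Conjecture \ref{conj:torus}. For a $T$-fixed subscheme $Z \in X^T$, the support of $Z$ lies in the $T$-fixed locus $\{0\} \sqcup H$, so $Z$ splits as a disjoint union $Z_0 \sqcup Z_H$; hence $Y^T = \{Z \in X^T : Z_H = \emptyset\}$, and this is open in $X^T$ by upper semicontinuity of the length of $Z \cap H$ in a flat family. For any $Z \in X$ with $\lim_{t \to \infty}(tZ) \in Y$: any component of $Z$ supported at a point $p \in \mathbf{A}^n \setminus \{0\}$ flows into $H$ as $t \to \infty$, and any component on $H$ is fixed there, so for the limit to be supported at the origin, $Z$ itself must already be supported at the origin, i.e. $Z \in Y$. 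Finally, I identify $U$: a subscheme $Z$ with all support in $\mathbf{A}^n$ has $\lim_{t\to 0}(tZ)$ supported at the origin (each isolated point of the support is attracted to $0$, and the scheme structure is carried along $T$-equivariantly in $\Hilb_d(\mathbf{P}^n)$), while any $Z$ with support meeting $H$ has $\lim_{t\to 0}(tZ)$ also meeting $H$. Therefore $U = \Hilb_d(\mathbf{A}^n)$.

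With these checks in place, Theorem \ref{complex} applied to $(X,Y,U)$ gives that $\Hilb_d(\mathbf{A}^n,0) = Y \inj U = \Hilb_d(\mathbf{A}^n)$ is a homotopy equivalence in the classical topology. The main subtlety I expect is the identification $U = \Hilb_d(\mathbf{A}^n)$ on the scheme level: one must confirm, using the valuative criterion and the projectivity of $X$, that the limit $\lim_{t\to 0}(tZ)$ really is supported at the origin whenever $Z$ has support in $\mathbf{A}^n$, rather than acquiring spurious limit components on $H$. Once that point-set picture is secured, the remaining verifications are routine semicontinuity and properness arguments.
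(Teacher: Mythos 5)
Your proposal is correct and follows essentially the same route as the paper: take $X=\Hilb_d(\P^n)$, $Y=\Hilb_d(\mathbf{A}^n,0)$, and the diagonal scaling action $t\cdot[x_0:\cdots:x_n]=[x_0:tx_1:\cdots:tx_n]$, then apply Theorem~\ref{complex}, using the $\GL(n+1)$-equivariant ample line bundle coming from Grothendieck's Grassmannian embedding of the Hilbert scheme. The residual worry you flag at the end is resolved precisely by the Hilbert--Chow morphism $\Hilb_d(\P^n)\to\mathrm{Sym}^d(\P^n)$ that you yourself invoke: when $Z$ is supported in $\mathbf{A}^n$ the underlying $0$-cycle of $tZ$ converges to $d\cdot[0]$ as $t\to 0$, so the flat limit in the projective scheme $\Hilb_d(\P^n)$ also maps to $d\cdot[0]$ and is therefore supported at the origin, with no components appearing on $H$.
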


\begin{proof}[Proof of Corollary \ref{move-to-origin}]
Let $X=\Hilb_d(\P^n)$ and $Y=\Hilb_d(\mathbf{A}^n,0)$.
The idea is to use the action of the multiplicative
group $T$ (that is, $\C^*$) on $\Hilb_d(\P^n)$,
coming from the action
of $T$ on $\P^n$ by $$t([x_0,\ldots,x_n])=[x_0,tx_1,\ldots,tx_n].$$
(We identify $\mathbf{A}^n$ with the open subset $x_0\neq 0$ in $\P^n$.)
Here $X$ has a
$\GL(n+1)$-equivariant ample line bundle by construction.
(Namely, Grothendieck constructed the Hilbert scheme
as a closed subscheme of the Grassmannian
of subspaces of the vector space
of homogeneous polynomials of sufficiently high
degree, sending a closed subscheme $S\subset \P^n$ to the linear subspace
of polynomials that vanish on $S$ \cite[Section~I.1]{Kollarrat}.
The standard ample line bundle $O(1)$ on the Grassmannian
is $GL(n+1)$-equivariant.)
In particular, $X$ has
a $T$-equivariant ample line bundle.

The open subset $U\subset X$ of 0-dimensional schemes
that converge as $t\in T$ approaches 0 to a subscheme supported
at $[1,0,\ldots,0]$ is exactly $\Hilb_d(\mathbf{A}^n)$.
The action of $T$ on $U$ need not extend to a morphism $\mathbf{A}^1\times U\to U$
(or even $\mathbf{A}^1\times Y\to Y$). Nonetheless, the desired
homotopy equivalence follows from Theorem \ref{complex}.
\end{proof}

\begin{remark}
The action of $T$ on $Y=\Hilb_d(\mathbf{A}^n,0)$ does not extend to a morphism
$\mathbf{A}^1\times Y\to Y$ in any case where $Y\neq Y^T$.
For example, for $Y=\Hilb_3(A^2,0)$, the point $S_a:=\{ x=ay^2, y^3=0\}$
in $Y$ has $\lim_{t\to 0}t(S_a)=
Z:=\{ x^2=0,xy=0,y^2=0\}$ for any $a\neq 0\in \C$, whereas
the point $S_0:=\{x=0, y^3=0\}$ in $Y$ is fixed by $T$
and hence has $\lim_{t\to 0}t(S_0)=S_0$.
\end{remark}

\section{$\mathbf{G}_m$-actions and broken trajectories}

We show here that for an action of the multiplicative group $T=\mathbf{G}_m$
on a projective scheme,
every limit of $T$-orbits is a {\it broken trajectory},
meaning a chain of $T$-orbits that connect a finite sequence
of $T$-fixed points. This is analogous to fundamental results
in Morse homology. Namely, given a smooth function
on a closed Riemannian manifold satisfying some mild conditions,
every limit of gradient flow lines is a broken trajectory,
meaning a chain of gradient flow lines that connect
a finite sequence of critical points
\cite[Theorem 4.9, Definition 4.10]{BH}.
For a $T$-action on a smooth complex projective variety, one can
in fact deduce the results here from those in Morse homology, applied
to a Hamiltonian function for the $T$-action. Instead,
we give a direct proof over any field. It turns out that
smoothness is irrelevant.

\begin{proposition}
\label{broken}
Let $X$ be a projective scheme over a field $k$
with an action of $T=\mathbf{G}_m$.
Suppose that there is a $T$-equivariant ample line bundle
on $X$. Then every limit of $T$-orbit closures in $X$ $($in the Chow variety
of effective 1-cycles on $X)$ is a broken trajectory, that is,
a chain of $T$-orbits $($with some positive multiplicities$)$
connecting some $T$-fixed points.

In more detail: let $C$ be a smooth curve over $k$ with a morphism
$f\colon C\to X$, not mapping into $X^T$.
Composing $f_T\colon T\times C\to T\times X$
with the action of $T$
gives a morphism $T\times C\to X$, which extends to a morphism
$\P^1\times (C-Z)\to X$ for some 0-dimensional closed subset $Z$ in $C$.
This gives a morphism $e$ from $C-Z$ to the Chow variety of 1-cycles on $X$,
which extends to all of $C$ by properness of Chow varieties. Then for
each $k$-point $c$ in $C$ $($possibly in $Z)$, $e(c)$ is a broken trajectory
over $k$, meaning the sum of $T$-orbits $($with some positive
multiplicities$)$ of points
$y_1,\ldots,y_n$ in $X(k)$ that connect $T$-fixed points
$x_0,\ldots,x_n$ in $X(k)$. More precisely, $\lim_{t\to 0}t(y_i)=x_{i-1}$
and $\lim_{t\to \infty}t(y_i)=x_i$ for each $1\leq i\leq n$.
\end{proposition}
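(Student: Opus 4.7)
The plan is to reduce to a local DVR computation after equivariantly embedding $X$ into projective space. Since the claim is local on $C$ and concerns the extension to a proper target, I can replace $C$ by $\Spec R$, where $R=\mathcal{O}_{C,c}$ is the DVR at $c$ with uniformizer $\pi$, fraction field $K$, and generic point $\eta$. Using a suitable power of the $T$-equivariant ample line bundle, I obtain a $T$-equivariant closed embedding $X\hookrightarrow \P(V)$ for a finite-dimensional $T$-representation $V=\bigoplus_{w\in\Z} V_w$. Since a closed embedding $X\hookrightarrow \P(V)$ induces a closed embedding of Chow varieties of $1$-cycles, it suffices to prove the claim for $X=\P(V)$.

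Lift $f(\eta)\in X(K)$ to a primitive element $v\in V\otimes R$ (not divisible by $\pi$) and decompose $v=\sum_w v_w$ with $v_w\in V_w\otimes R$. The $T$-action then yields a rational map $\phi\colon \P^1_R\to \P(V)$ sending $(t,s)\mapsto [\sum_w t^w v_w(s)]$, which over $\eta$ extends across both $t=0$ and $t=\infty$ to give the generic orbit closure. Over the closed point, some coefficients $\bar v_w\in V_w\otimes k(c)$ may vanish, producing indeterminacy on the special fiber. I would resolve this by iteratively blowing up the indeterminacy points (all of which lie at $T$-fixed points on the evolving family), obtaining a proper flat family $\tilde\pi\colon \tilde{\mathcal C}\to \Spec R$ with generic fiber $\P^1_K$, a morphism $\tilde\phi\colon \tilde{\mathcal C}\to X$ extending $\phi$, and an induced $T$-action on $\tilde{\mathcal C}$ (because blowing up $T$-invariant centers preserves $T$-equivariance).

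The central fiber $\tilde{\mathcal C}_c$, viewed as a Cartier divisor with multiplicities arising from the blow-ups, is a connected nodal curve whose irreducible components $D$ are $T$-equivariant $\P^1$'s, and each maps $T$-equivariantly to $X$. Hence $\tilde\phi(D)$ is either a single $T$-fixed point (when $\tilde\phi|_D$ is constant) or the closure of a $T$-orbit with two $T$-fixed endpoints. By the compatibility of Chow cycles under specialization, $e(c)=\tilde\phi_\ast[\tilde{\mathcal C}_c]=\sum_D m_D\deg(\tilde\phi|_D)\cdot[\tilde\phi(D)]$, where $m_D$ is the multiplicity of $D$ in $\tilde{\mathcal C}_c$ and the contracted components contribute zero. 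This presents $e(c)$ as a positive integer combination of $T$-orbit closures in $X$.

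The remaining subtlety is showing that the non-contracted components form a chain rather than a more general tree. For this I would use the weight function $\mu_L\colon X^T\to \Z$ coming from the $T$-equivariant ample line bundle $L$: for any non-fixed $y\in X$ one has $\mu_L(\lim_{t\to 0}ty)>\mu_L(\lim_{t\to\infty}ty)$, so the weights at successive fixed endpoints are strictly monotonic along any path in the dual graph of $\tilde{\mathcal C}_c$. Combined with careful tracking of how each blow-up inserts a new $\P^1$ into the existing central fiber, this should rule out branching and force a linear chain structure. I expect this chain-versus-tree step, rather than the resolution or pushforward steps, to be the main technical obstacle.
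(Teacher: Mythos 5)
Your proposal takes a genuinely different route from the paper's, and with some care it can be made to work; here is how the two compare. Both begin by using a $T$-equivariant embedding to reduce to $X=\P(V)$. From there the paper computes the limit cycle $e(c)$ directly: after passing to the completed local ring $k[[u]]$ at $c$, it parametrizes points of the orbit family as
\[
\lim_{u\to 0}\bigl[g(u)^{a_0}z_0(u),\ldots,g(u)^{a_r}z_r(u)\bigr],\qquad g\in\overline{k((u))},\ g\neq 0,
\]
and reads off the answer from the Newton polygon of the pairs $(a_i,\ord_u z_i)$. Convexity of the Newton polygon makes the broken-trajectory (chain) structure immediate: the $T$-fixed points $x_0,\ldots,x_n$ are the vertices, the orbits $y_1,\ldots,y_n$ are the non-vertical edges, and the slopes increase monotonically, so there is no chain-versus-tree question at all. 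Your approach instead resolves the rational map $\phi\colon \P^1_R\dashrightarrow \P(V)$ by $T$-equivariant blow-ups and analyzes the central fiber geometrically as a degeneration of $\P^1$. That is a valid alternative, but it pushes the combinatorics into the geometry of the central fiber, which is exactly the work the Newton polygon does for free.

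On the chain-versus-tree step: your worry is legitimate but I would resolve it differently than via the weight function alone. The relevant observations are that (a) the indeterminacy locus at each stage is a $T$-invariant finite subset of the surface, hence consists of $T$-fixed points, and (b) the exceptional $\P^1$ of a blow-up at a fixed point with tangent weights $(\alpha,\beta)$, $\alpha\neq\beta$, carries a nontrivial $T$-action with exactly two fixed points. Consequently every component of the central fiber has at most two fixed points, hence at most two nodes, so the dual graph (already a tree by arithmetic genus zero) has all valences $\leq 2$ and is a chain. To apply (b) you must rule out $\alpha=\beta$ at every blow-up center; starting from weights $(1,0)$ at $(0,c)$ (and $(-1,0)$ at $(\infty,c)$) and using the standard rule that blow-up replaces $(\alpha,\beta)$ by $(\alpha,\beta-\alpha)$ and $(\alpha-\beta,\beta)$, one checks the two weights always keep weakly opposite signs and so never coincide. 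The weight function $\mu_L$ then merely orients the chain (and guarantees distinctness of the $x_i$), which is useful but not where the chain structure comes from. In short: your outline is sound and the approach is viable, but the structural content you flag as the main technical obstacle is precisely what the paper's Newton-polygon computation encodes directly, which is why the paper's proof is shorter.
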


If the image of $f\colon C\to X$ is contained
in $X^T$, then the morphism $e$ to the Chow variety of 1-cycles
is constant (equal to zero as a 1-cycle).
The proposition would still be true in that case
if suitably interpreted: namely, any limit
of $T$-fixed points in $X$ is a $T$-fixed point.

\begin{proof}
There is a $T$-equivariant embedding of $X$ into the projective
space $P(V)$ for some representation $V$ of $T$. Given that,
we can assume that $X=P(V)$; this greatly simplifies the situation.
Then $T$ acts on $X=\P^r$ by
$t([z_0,\ldots,z_r])=[t^{a_0}z_0,\ldots,t^{a_r}z_r]$
for some integers $a_i$.
We can assume that $a_0\leq\cdots\leq a_r$.

Composing $f\colon C\to X$ with the action of $T$ on $X$
gives a morphism $T\times C\to X$, which can be viewed
as a rational map $G\colon \P^1\times C\dashrightarrow X$ over $k$.
Since $X$ is proper over $k$,
$G$ becomes a morphism $W\to X$, where $W$ is a surface obtained by
blowing up $\P^1\times C$ finitely many times
at closed points. In particular, $G$ restricts to a morphism $\P^1\times (C-Z)
\to X$ for some 0-dimensional closed subset $Z$ of $X$.

Because $C$ is normal and all fibers of $W\to C$
have dimension 1, the fibers of $W\to C$ form
a well-defined family of effective
1-cycles on $W$, and hence they give a morphism from $C$ to the Chow
variety of 1-cycles on $W$ \cite[Theorem I.3.17]{Kollarrat}. Pushing
cycles forward makes the Chow
variety covariantly functorial under arbitrary morphisms
\cite[Theorem I.6.8]{Kollarrat}. Therefore, the morphism $W\to X$
gives a morphism $e$ from $C$ to the Chow variety
of 1-cycles on $X$.

For each $k$-point $c$ in $C$ (possibly in $Z$),
$e(c)$ is an effective 1-cycle in $X$ whose support $S$ is the image
under $G$ of 
the inverse image of $c$ in $W$. Let us describe this image using
power series.
The completed local ring of $C$ at $c$ is isomorphic
to the power series ring $k[[u]]$. So the curve $f\colon C\to X=\P^r$
near $c$ is given by some power series $[z_0(u),\ldots,z_r(u)]$
with $z_i(u)\in k[[u]]$, not all zero. 
Every point in the inverse image of $c$ in $W$ is contained
in some curve in $W$ that meets the open set $T\times (C-Z)$ in $W$.
After completion,
this curve determines a finite extension $F$ of the field $k((u))$,
along with an $F$-point of $T\times (C-Z)$ over the given
$k((u))$-point of $C$. Therefore,
the support $S$ of the limit 1-cycle $e(c)$,
viewed as a subset of $X(\overline{k})$,
is the set of all $\overline{k}$-points in $X$
that can be written as
$$p=\lim_{u\to 0} [g(u)^{a_0}z_0(u),\ldots,g(u)^{a_r}z_r(u)]$$
for some $g$ in the algebraic closure $\overline{k((u))}$, $g\neq 0$.

This limit point depends mainly
on the rational number $b:=\ord_u(g)$.
The situation is described by the Newton
polygon of the pairs $(a_i,\ord_u(z_i))$ in $\Z\times (\Z\cup{\infty})$,
as in Figure \ref{newton}. (Here $\ord_u(z_i)=\infty$
if $z_i(u)$ is identically zero.)
\begin{figure}\centering
\begin{tikzpicture}[xscale=1, yscale=1]
\draw [<->] (0,4) -- (0,0) -- (6,0);
\node [below right] at (3,0) {$a_i$};
\node [left] at (0,3) {$\ord_u(z_i)$};
\draw [thick] (1,4) -- (1,2.5);
\draw [thick] (1,2.5) -- (2,1.5);
\draw [thick] (2,1.5) -- (4,1);
\draw [thick] (4,1) -- (5,1.5);
\draw [thick] (5,1.5) -- (5,4);
\draw[fill] (1,2.5) circle [radius=1pt];
\draw[fill] (2,1.5) circle [radius=1pt];
\draw[fill] (2.5,2.5) circle [radius=1pt];
\draw[fill] (3.2,1.2) circle [radius=1pt];
\draw[fill] (4,1) circle [radius=1pt];
\draw[fill] (4.5,3.0) circle [radius=1pt];
\draw[fill] (5,1.5) circle [radius=1pt];
\end{tikzpicture}
\caption{Newton polygon of the pairs $(a_i,\ord_u(z_i))$}
\label{newton}
\end{figure}
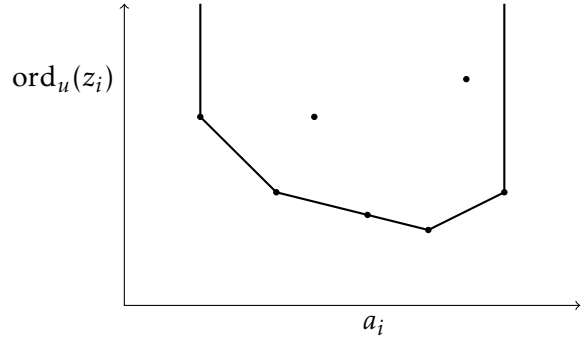

Namely, let $I$ be the set of numbers $i\in\{0,\ldots,r\}$ such that
$ba_i+\ord_u(z_i)$ reaches its minimum value. (That is, let $l$ be
the unique line of slope $-b$ that meets the Newton polygon but not
the region above it; then $I$ corresponds to the points
$(a_i,\ord_u(z_i))$ that lie on $l$.)
Then we compute that the limit point $p$ defined above has
all coordinates zero except the $i$th coordinate for elements $i\in I$.
Replacing $g$ by another function with the same value of $b$ (that is,
multiplying $g$ by a unit $h(u)$) just replaces $p$ by $h(0)(p)$, another point
in the same $T$-orbit as $p$.

For all but finitely many rational numbers $b$,
the limit point $p$ above belongs to a set $\{x_0,x_1,\ldots,x_n\}$
of $T$-fixed points, these being
indexed by the vertices of the Newton polygon. (For these values of $b$,
all nonzero coordinates $i$ in the set $I$ above have the same weight $a_i$,
which means that $p$ is a $T$-fixed point.) For the remaining $n$
values of $b$,
corresponding to the non-vertical edges of the Newton polygon,
the limit point can be anywhere in the $T$-orbit of a certain point $y_i$
in $X$
with $\lim_{t\to 0}t(y_i)=x_{i-1}$ and $\lim_{t\to \infty}t(y_i)=x_i$.
Here the points $y_1,\ldots,y_n$ (and hence the points $x_0,x_1,\ldots,
x_n$) can be taken to be
$k$-points of $X$, by choosing the function $g\in \overline{k((u))}$
with a given
value of $\ord_u(g)\in\Q$ to lie in a totally ramified extension
of $k((u))$, for example in $k((u^{1/e}))$ for a positive integer $e$.
\end{proof}

\section{Openness}

We now prove that the subset $U$ attracted in $Y$ in Conjecture
\ref{conj:torus} is open in $X$.

\begin{lemma}
\label{lemma:open}
As in Conjecture \ref{conj:torus},
let $X$ be a projective scheme over a field $k$
with an action of $T=\mathbf{G}_m$.
Suppose that there is a $T$-equivariant ample line bundle
on $X$. Let $Y$
be a $T$-invariant closed subset of $X$ 
such that every point $x$ in $X$ with
$\lim_{t \to \infty}(tx)\in Y$ is in $Y$. Suppose
that the fixed point set $Y^T$ is open in $X^T$.
Let $U$ be the subset of points $x$ in $X$
such that $\lim_{t \to 0}(tx)$ is in $Y$. Then
$Y$ is contained in $U$, and $U$ is open in $X$.
\end{lemma}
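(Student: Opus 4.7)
The containment $Y\subset U$ is immediate: for $y\in Y$, the orbit $T\cdot y$ lies in $Y$ by $T$-invariance, its closure $\overline{T\cdot y}$ lies in $Y$ because $Y$ is closed, and projectivity of $X$ ensures that $\sigma_0(y):=\lim_{t\to 0}t\cdot y$ exists inside $\overline{T\cdot y}\cap X^T \subset Y\cap X^T=Y^T$, so $y\in U$.

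For the openness of $U$ the plan is a proof by contradiction via a curve argument combined with Proposition~\ref{broken}. Fix a $T$-equivariant embedding $X\hookrightarrow\P(V)$ with weights $a_0\leq\cdots\leq a_r$, and stratify $X$ by which subset of the homogeneous coordinates is nonzero. On each such stratum the map $\sigma_0$ is regular, with target the appropriate projectivized weight subspace, so $U=\sigma_0^{-1}(Y^T)$ is a finite union of locally closed subsets, hence constructible. If $U$ were not open, there would exist a smooth curve $C/k$, a morphism $f\colon C\to X$, a $k$-point $c\in C$, and a dense open $V\subset C$ with $f(c)=x_0\in U$ and $f(V)\subset X\setminus U$. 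If $f(V)\subset X^T$, then $f$ factors through $X^T$ and its generic image lies in the closed subset $X^T\setminus Y^T$ (closed because $Y^T$ is open in $X^T$), forcing $f(c)\in X^T\setminus Y^T$ and contradicting $f(c)\in Y^T$. Shrinking $V$, we may assume $f(V)\cap X^T=\emptyset$ and that the support pattern $\{i:z_i(f(c'))\neq 0\}$ is constant on $V$. Proposition~\ref{broken} then yields a morphism $e\colon C\to\mathrm{Chow}(X)$ whose value at $c$ is a broken trajectory $p_0\to Ty_1\to p_1\to\cdots\to Ty_n\to p_n$ with the $p_i\in X^T$.

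On $V$ the bottom-of-orbit map $\beta(c'):=\sigma_0(f(c'))$ is a morphism to $X^T$, and by properness of $X^T$ it extends to $\bar{\beta}\colon C\to X^T$. A direct comparison with the Newton-polygon picture in the proof of Proposition~\ref{broken} identifies $\bar{\beta}(c)$ with the leftmost vertex $p_0$ of the broken trajectory. Since $\bar{\beta}(V)\subset X^T\setminus Y^T$ and this set is closed in $X^T$, we conclude $p_0\in X^T\setminus Y^T$. On the other hand, $x_0=f(c)$ appears in the support of the cycle $e(c)$ (as the ``$b=0$'' slice in the Newton polygon), so either $x_0=p_j$ or $x_0\in Ty_j$ for some $j$; in either case $\sigma_0(x_0)$ equals one of the fixed points $p_j$, and this $p_j$ lies in $Y^T$ because $x_0\in U$. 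The hypothesis on $Y$ now propagates $Y$-membership downward: from $\sigma_\infty(y_j)=p_j\in Y$ we obtain $y_j\in Y$, hence $\overline{Ty_j}\subset Y$ and in particular $p_{j-1}\in Y$; iterating yields $p_0\in Y\cap X^T=Y^T$, contradicting $p_0\in X^T\setminus Y^T$.

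The main technical obstacle is the identification $\bar{\beta}(c)=p_0$: the support of $e(c)$ contains several fixed points, so one has to use the explicit Newton polygon of $(a_i,\ord_u(z_i))$ from the proof of Proposition~\ref{broken} to verify that the natural extension of the generic bottom map hits the leftmost vertex $p_0$ and not some intermediate $p_j$. A secondary issue is the initial reduction from ``$U$ is not open'' to the existence of the curve $f\colon C\to X$ above, which is handled by constructibility of $U$ combined with the valuative criterion and normalization.
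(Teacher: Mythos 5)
Your proof is correct and follows essentially the same strategy as the paper's: deduce $Y\subset U$ from $T$-invariance and closedness, establish constructibility of $U$, and then run a contradiction through a curve $f\colon C\to X$ and Proposition~\ref{broken}, identifying the $t\to 0$ endpoint of the limiting broken trajectory as the limit of $\lim_{t\to 0}t(f(d))\in X^T\setminus Y^T$ and propagating $Y$-membership down the chain via the hypothesis on $Y$ to reach a contradiction. The paper states the identification of the broken trajectory's bottom endpoint somewhat tersely as a consequence of Proposition~\ref{broken}; you spell out the same point via the extended map $\bar\beta$ and the Newton polygon, which is a reasonable elaboration rather than a different argument.
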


\begin{proof}
Because $Y$ is a $T$-invariant closed subset of $X$,
$Y$ is contained in $U$.

Clearly $U$ is a constructible subset of $X$; in Drinfeld's notation
from section \ref{section:main} above, $U$ is the image in $X$
of some connected components of $X^+$, those whose limit
as $t\to 0$ is in $Y^T$. (By assumption, $Y^T$ is a union
of some connected components of $X^T$.) It suffices to prove
that $U$ is open in $X$ after replacing $k$ by its
algebraic closure. If $U$ is not open
in $X$, then there is a morphism $f$ from a smooth curve $C$ to $X$
with a $k$-point $c\in C$ such that $f(c)\in U$ and $f(d)\not\in U$
for all $d\neq c$ in $C$. Thus $\lim_{t\to 0}t(f(d))\in X^T-Y^T$
for $d\neq c$, whereas $\lim_{t\to 0}t(f(c))\in Y^T$.

By Proposition \ref{broken}, the limit of the $T$-orbit closures
of the points $f(d)$ as $d$ approaches $c$ is a broken trajectory
containing $f(c)$. By what we have said about $f(d)$,
this broken trajectory ends (in the $t\to 0$ direction)
at a point in $X^T-Y^T$. But this broken trajectory also contains
$f(c)$ and hence the point $\lim_{t\to 0}t(f(c))$.
Therefore, there is a broken
trajectory from $\lim_{t\to 0}t(f(c))\in Y^T$ down (in the $t\to 0$
direction) to a point in $X^T-Y^T$. This contradicts our assumption
on $Y$, namely that $Y$ is a $T$-invariant closed subset
such that every point $x$ in $X$ with
$\lim_{t \to \infty}(tx)\in Y$ is in $Y$. We have shown
that $U$ is open in $X$.
\end{proof}

\section{Proof of Theorem \ref{complex}}
\label{section:proof}

\begin{proof}
To recall the assumptions: we have a projective scheme $X$ over $\C$
with an action of $T=\C^*$,
and there is a $T$-equivariant ample line bundle
on $X$. We have
a $T$-invariant closed subscheme $Y$ of $X$ 
such that every point $x$ in $X$ with
$\lim_{t \to \infty}(tx)\in Y$ is in $Y$, and
the fixed point set $Y^T$ is open in $X^T$.
Let $U$ be the subset of points $x$ in $X$
such that $\lim_{t \to 0}(tx)$ is in $Y$; then $U$ is Zariski open in $X$,
and $Y$ is contained in $U$, by Lemma \ref{lemma:open}.
We want to show that
the inclusion $Y\to U$ is a homotopy equivalence in the classical
topology.

\begin{lemma}
\label{zerolimit}
Let $q_1,q_2,\ldots$ be a sequence of complex points in $X$
that converge to a $T$-fixed point $w$. Let $t_1,t_2,\ldots$
be a sequence in $\C^*$ that converges to zero in $\C$.
Then any limit point of the sequence $t_i(q_i)$ in $X$ lies in a broken
trajectory ``below $w$''. That is, such a limit point belongs to the union
of the $T$-orbits of some points
$y_1,\ldots,y_n$ in $X$ and some $T$-fixed points
$x_0,\ldots,x_n=w$ such that $\lim_{t\to 0}t(y_i)=x_{i-1}$
and $\lim_{t\to \infty}t(y_i)=x_i$ for each $1\leq i\leq n$.
\end{lemma}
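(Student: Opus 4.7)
The plan is to reduce the statement to the Newton polygon analysis already carried out in Proposition~\ref{broken}. That proposition requires a morphism from a smooth algebraic curve as input, whereas here I am given only the classical convergence of two discrete sequences; so the first task is to manufacture such an arc. By compactness of $X(\C)$, pass to a subsequence so that $t_iq_i$ converges to a single limit point $p$, and observe that it suffices to show any such $p$ lies in a broken trajectory below $w$. Consider the graph of the action together with the time parameter,
$$\Gamma \;=\; \{(q,\,tq,\,t)\in X\times X\times \mathbf{A}^1 \,:\, q\in X,\ t\in T\},$$
and let $G=\overline{\Gamma}$ be its Zariski closure. The triples $(q_i,t_iq_i,t_i)$ lie in $\Gamma$ and converge classically to $(w,p,0)$, so $(w,p,0)\in G$. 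After a further subsequence, all $q_i$ lie in a single irreducible component $X_0$ of $X$, so $w\in X_0$, and $(w,p,0)$ lies in the irreducible closed subscheme $G'\subset X\times X\times \mathbf{A}^1$ obtained as the closure of $\{(q,tq,t):q\in X_0,\,t\in T\}$. Since $G'\cap\{t\neq 0\}$ is dense in $G'$, standard curve-selection arguments (resolve the singularities of $G'$, lift $(w,p,0)$, cut with sufficiently general hyperplane sections) produce a smooth curve $C$ and a morphism $f\colon C\to G'$ with $f(c)=(w,p,0)$ for some $c\in C$ and $f(C\setminus\{c\})\subset\{t\neq 0\}$. Writing $f=(q,r,t)$ componentwise, this yields $q(c)=w$, $t(c)=0$, $r(c)=p$, and $r=tq$ on $C\setminus\{c\}$.

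Second, I would apply the Newton polygon calculation verbatim to this arc. Fix a $T$-equivariant embedding $X\hookrightarrow \P^N$ with integer weights $a_0\leq\cdots\leq a_N$ and a uniformizer $u$ at $c$, so that $q(u)=[z_0(u):\cdots:z_N(u)]$ with $z_j\in \C[[u]]$ normalized by $\min_j\ord_u(z_j)=0$, and $t(u)=u^bh(u)$ with $b=\ord_u(t)\geq 1$ and $h(0)\neq 0$. Since $w=q(c)$ is $T$-fixed of some weight $a_w$, the identity $q(c)=w$ forces $z_j(0)=0$ for all $j$ with $a_j\neq a_w$; equivalently, the Newton polygon of the pairs $(a_j,\ord_u(z_j))$ has its unique height-$0$ vertex at $(a_w,0)$. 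The $j$-th coordinate of $t(u)q(u)$ has $u$-order $ba_j+\ord_u(z_j)$, and as the slope parameter $\beta$ increases from $0^+$ toward $\infty$, the argument in the proof of Proposition~\ref{broken} shows that the resulting limit traces a broken trajectory $x_0,y_1,x_1,\ldots,y_n,x_n$ whose topmost $T$-fixed point $x_n$ (the limit at $\beta\to 0^+$) equals $w$ and whose successive vertices have strictly smaller $T$-weights, so the trajectory descends in the $t\to 0$ direction. The specific slope $\beta=b\geq 1$ that appears here realizes the limit $r(c)=p$, so $p$ lies either on the $T$-orbit of some $y_i$ with $1\leq i\leq n$ or at some fixed point $x_i$ with $0\leq i\leq n$, which is precisely the conclusion of the lemma.

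The hard part is the first step: promoting the classical-topology convergence of a sequence to an honest algebraic arc. The trick is to package $q$, $tq$, and $t$ simultaneously inside the graph $G\subset X\times X\times \mathbf{A}^1$, so that the limit point $(w,p,0)$ becomes a single closed point of an algebraic set that couples $w$ to $p$. Once this packaging is in hand, curve selection and the Newton polygon analysis of Proposition~\ref{broken} deliver the conclusion with no further ideas.
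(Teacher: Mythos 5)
Your argument is correct and reduces, like the paper's, to the Newton polygon computation of Proposition~\ref{broken} applied to an algebraic arc; the two proofs diverge only in how that arc is produced. The paper first passes to a subsequence on which the $q_i$ all have the same \emph{lowest} $T$-weight, introduces the locally closed stratum $K$ of points with that lowest weight, notes that the action extends (by inspection) to a morphism $\mathbf{A}^1\times K\to\overline{K}$, and then resolves the resulting rational map $\mathbf{A}^1\times\overline{K}\dashrightarrow X$ by a blow-up $M$; the lifted sequence $(t_i,q_i)$ converges in $M$ to a point $m$ over $(0,w)$, and a curve through $m$ meeting $T\times K$ gives the required power series $g(u),z(u)$. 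You instead keep all of $q_i$, $t_i q_i$, and $t_i$ together as a point of the graph $\Gamma\subset X\times X\times\mathbf{A}^1$, observe that the Euclidean limit $(w,p,0)$ lies in the Zariski closure $G'$ of the irreducible component $\Gamma_0$, and invoke curve selection (resolve $G'$, lift, cut by generic hyperplanes) to get the arc. Your packaging avoids the weight-stratification step (introducing $K$) and the explicit extension of the action to $\mathbf{A}^1\times K$, at the cost of appealing to resolution of singularities of $G'$ rather than the more targeted blow-up resolving a single rational map; both are routine over $\C$ (and over $\R$, which matters for Theorem~\ref{real}). The Newton polygon analysis you carry out is the same as the paper's: the normalization $\min_j\ord_u(z_j)=0$ together with $q(c)=w$ pins the unique height-zero vertex at $(a_w,0)$, so the limit at slope parameter $\beta\to 0^+$ is $w$, and since $b=\ord_u(t)\geq 1>0$ the actual limit $p$ lies on the part of the broken trajectory descending from $w$. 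One small indexing remark: Proposition~\ref{broken} a priori yields a trajectory for $\beta$ ranging over all of $\Q$, including $\beta<0$, which produces fixed points ``above'' $w$; you implicitly and correctly discard that portion, since only $\beta>0$ is relevant here, and relabel so that $w=x_n$ is the top. Overall this is a valid and slightly more streamlined route to the same conclusion.
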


\begin{proof}
We largely follow the proof of Proposition \ref{broken}.
Choose a $T$-equivariant embedding of $X$ into $P(V)$
for some representation $V$ of $T$. We can write the action
of $T$ on $\P^r=P(V)$ by $t([z_0,\ldots,z_r])=[t^{a_0}z_0,
\ldots,t^{a_r}z_r]$ with $a_0\leq\cdots\leq a_r$. After passing
to a subsequence, we can assume that the points $q_1,q_2,\ldots$
all have the same lowest weight $a_j$ of a nonzero coefficient.
On the locally closed subset $K$ in $X$ of points with this lowest
weight, the $T$-action $T\times K\to K$
extends to a morphism $f\colon \mathbf{A}^1\times K\arrow \overline{K}$,
by inspection. Here $\overline{K}$ denotes the closure of $K$
in $X$.

By assumption,
the points $(t_i,q_i)$ in $A_1\times K$ converge to the point
$(0,w)$ in $\mathbf{A}^1\times \overline{K}$. The rational map
$f\colon \mathbf{A}^1\times \overline{K}\dashrightarrow \overline{K}$ becomes
a morphism after some blow-up $M\to \mathbf{A}^1\times \overline{K}$
that is an isomorphism over the complement of $0\times(\overline{K}-K)$.
So any limit point of the sequence $t_i(q_i)$ in $X$ is equal
to $f(m)$ for some point $m$ in $M$ over $(0,w)\in \mathbf{A}^1\times\overline{K}$.
In particular, we can choose
a smooth algebraic curve with a morphism to $M$
that goes through $m$ and meets
the open set $T\times K$.

Thus, by considering the completion of this curve at the point
that maps to $m$, we have power series $g(u)\neq 0\in\C[[u]]$ and
$z(u)\in X(\C((u)))$ such that $g(0)=0$, $\lim_{u\to 0}z(u)=w$,
and $\lim_{u\to 0}(g(u))(z(u))$
is the given limit point in $X$. The proof of Proposition
\ref{broken} showed that the limit of the closures of $T$-orbits of
$z(u)$ as $u$ approaches 0 is a broken trajectory in $X$, which clearly
contains $w$ as one of the $T$-fixed points $x_0,\ldots, x_n$,
say $w=x_j$. Moreover, since $g(0)=0$ (so that $b:=\ord_u(g)>0$),
the explicit calculation of $\lim_{u\to 0}(g(u))(z(u))$ in $\P^r$
shows that this limit point is ``below $w$'', that is, in the union
of $x_0,\ldots,x_j=w$ and the $T$-orbits that connect them.
\end{proof}

We continue the proof of Theorem \ref{complex}.
By the triangulation of real semialgebraic sets,
there is a triangulation of $X$ with $Y$ as a subcomplex
\cite[Section~1]{Hironaka1975}. Therefore,
$Y$ has arbitrarily small simplicial regular neighborhoods $N$ in $X$,
and for these the inclusion $Y\to N$ is a homotopy equivalence
\cite[Chapter~3]{RS}.
Let $N$ be a (compact) regular neighborhood
of $Y$ contained in $U$.

Consider the submonoid $(0,1]$ of $T=\C^*$.
It would be convenient to have $(0,1]\cdot N\subset N$,
but it is not obvious that we can arrange that.
Instead, we argue as follows. I claim that each point $w\in Y^T$
has a neighborhood $N_1$ in $U$ such that $t(N_1)\subset N$
for all $t\in (0,1]$. If not, then there would be a sequence
$q_i$ in $U$ converging to $w$ such that for each positive integer $j$,
$(0,1]\cdot q_j$ is not contained in $N$. 
So there is a sequence
$t_i\in (0,1]$ such that $t_i(q_i)$ is not in $N$. The sequence
$t_i$ must converge to zero; otherwise, a subsequence
of $t_i(q_i)$ would converge to the $T$-fixed point $w$ in $Y$
(and hence infinitely many of those points would be in $N$).

After passing
to subsequences, we can assume that $t_i(q_i)$ converges to a point $v$
in $X-\inter(N)$, hence not in $Y$. By Lemma \ref{zerolimit},
$v$ belongs to the union of some finite chain of $T$-orbits going ``down''
from $w$, meaning the $T$-orbits of some points
$y_1,\ldots,y_n$ in $X$ and some $T$-fixed points
$x_0,\ldots,x_n=w$ such that $\lim_{t\to 0}t(y_i)=x_{i-1}$
and $\lim_{t\to \infty}t(y_i)=x_i$. 
By our assumption that all points $x$ in $X$
with $\lim_{t\to\infty}tx\in Y$ are in $Y$, it follows that $v$ is in $Y$,
a contradiction. Thus we have proved
the claim that each point $w\in Y^T$
has a neighborhood $N_1$ in $U$ such that $t(N_1)\subset N$
for all $t\in (0,1]$.

More generally, for each point $x\in U$ (not just in $Y^T$),
there is a real number $a\in (0,1]$ and
a neighborhood $N_1$ of $x$ in $U$ such that $t(N_1)\subset N$
for all $t\in (0,a]$. That follows from the previous
statement applied to the point $y=\lim_{t\to 0}t(x)\in Y^T$.

Therefore, for every compact subset $K$ of $U$,
there is a real number $a\in (0,1]$ such that
$t(K)\subset N$ for all $t\in (0,a]$. Equivalently,
$K\subset a^{-1}(N)$. In particular, there is a real number $c>1$
such that
the compact neighborhood $N$ of $Y$ is contained in the interior
of $c(N)$. It also follows that $U$ is the union of the subsets
$c^j(N)$ over all $j\geq 0$. 

Since the inclusion $Y\to N$ is a homotopy equivalence, so is the inclusion
$Y\to c^j(N)$ for each integer $j$. Therefore, each of the inclusions
$c^j(N)\to c^{j+1}(N)$ is also a homotopy equivalence.
Since $c^j(N)$ is a closed subset contained
in the interior of $c^{j+1}(N)$, the union of these subsets (namely, $U$)
has the colimit topology. Since this is a filtered colimit,
the colimit $U$ is equivalent to the homotopy colimit, and so
the inclusion $N\to U$ is a homotopy equivalence.
Since the inclusion $Y\to N$ is also a homotopy equivalence,
we conclude that $Y\to U$ is a homotopy equivalence.
\end{proof}

\section{The real case}

\begin{theorem}
\label{real}
Under the assumptions of Conjecture
\ref{conj:torus} with base field $\R$, the inclusion $Y(\R)\to U(\R)$
is a homotopy equivalence.
\end{theorem}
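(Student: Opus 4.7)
The plan is to follow the proof of Theorem \ref{complex} essentially line by line, substituting $X(\R)$ for $X(\C)$ and the submonoid $(0,1]\subset \R_{>0}\subset T(\R)$ for $(0,1]\subset \C^*$. Lemma \ref{lemma:open} already guarantees that $U$ is Zariski open in $X$ with $Y\subset U$, so $U(\R)$ is open in $X(\R)$ and contains $Y(\R)$. I would invoke Hironaka's triangulation of real semialgebraic sets, applied to the compact semialgebraic set $X(\R)$ with its closed semialgebraic subset $Y(\R)$, to produce arbitrarily small compact simplicial regular neighborhoods $N\subset U(\R)$ of $Y(\R)$ with $Y(\R)\to N$ a homotopy equivalence.

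The heart of the argument, and where I expect the only nontrivial adaptation, is the attraction claim: for every $w\in Y^T(\R)$ there is an open neighborhood $N_1$ of $w$ in $U(\R)$ with $t(N_1)\subset N$ for all $t\in (0,1]$. My plan is to argue by contradiction as in Theorem \ref{complex}: produce sequences $q_i\to w$ in $U(\R)$ and $t_i\in (0,1]$ with $t_i(q_i)\notin N$, force $t_i\to 0$, and pass to a subsequence with $t_i(q_i)\to v\in X(\R)\setminus \mathrm{int}(N)$, so in particular $v\notin Y(\R)$. The main obstacle is that Lemma \ref{zerolimit} was stated and proved over $\C$. I would handle this by base changing to $\C$: the real sequences $q_i$, $t_i$ also live in $X(\C)$ and $\C^*$, and the hypotheses of Conjecture \ref{conj:torus} are preserved under base change to the algebraic closure, since the condition on $Y$ is scheme-theoretic. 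Because the classical topology on $X(\R)$ agrees with the subspace topology from $X(\C)$, the point $v$ is still a limit point of $t_i(q_i)$ in $X(\C)$. Lemma \ref{zerolimit} then places $v$ in a broken trajectory below $w$ in $X(\C)$; the hypothesis $\lim_{t\to \infty}tx\in Y\Rightarrow x\in Y$, together with $T$-invariance and closedness of $Y$, propagates membership in $Y(\C)$ up the trajectory from $w\in Y^T$ down to $v$. Hence $v\in Y(\C)\cap X(\R)=Y(\R)$, the desired contradiction.

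With the attraction claim in hand at each $w\in Y^T(\R)$, the remaining arguments transfer verbatim from Theorem \ref{complex}. I would extend the claim to all $x\in U(\R)$ by applying it at $y=\lim_{t\to 0}tx$, which is a $T$-fixed $\R$-point of $Y$ because the morphism $\mathbf{A}^1\to X$, $t\mapsto tx$, is defined over $\R$ whenever $x$ is. Every compact $K\subset U(\R)$ is therefore absorbed into $N$ by some $t\in (0,1]$; choosing $c>1$ with $N\subset \mathrm{int}(c(N))$ gives $U(\R)=\bigcup_{j\geq 0}c^j(N)$ with each inclusion $c^j(N)\to c^{j+1}(N)$ a homotopy equivalence, and since this is a filtered colimit along closed inclusions, the colimit coincides with the homotopy colimit. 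Thus $N\to U(\R)$ is a homotopy equivalence, and composing with $Y(\R)\to N$ yields the desired homotopy equivalence $Y(\R)\to U(\R)$.
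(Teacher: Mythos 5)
Your proof is correct and follows the same overall strategy as the paper: transplant the proof of Theorem \ref{complex} to real points via regular neighborhoods, filtered colimits, and the attraction claim. The one place where you diverge from the paper is in establishing that attraction claim. The paper simply re-proves Lemma \ref{zerolimit} over $\R$, noting that Proposition \ref{broken} is already stated over an arbitrary field and in particular produces a broken trajectory of $\R$-points from orbits of $\R$-points; it then applies the hypothesis on $Y$ directly over $\R$. You instead base-change to $\C$, invoke the complex Lemma \ref{zerolimit} to land $v$ in a broken trajectory of possibly non-real points in $X(\C)$, and push the conclusion $v\in Y(\C)$ back down to $Y(\R)=Y(\C)\cap X(\R)$. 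Both routes work. The paper's is marginally cleaner, since staying over $\R$ sidesteps any discussion of how the hypothesis that every $x$ with $\lim_{t\to\infty}tx\in Y$ lies in $Y$ behaves under base change; your route trades that small extra check (which you handle correctly, as the condition is geometric and is preserved under $\R\to\C$) for the convenience of reusing the complex lemma verbatim.
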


\begin{proof}
This is similar to the complex case (Theorem \ref{complex}).
In particular, Lemma \ref{zerolimit} holds by the same proof
over $\R$ in place of $\C$, using that Proposition \ref{broken}
expresses any limit of $T$-orbits of $\R$-points as the union
of a finite chain of $T$-orbits of $\R$-points.
Given that, the proof of Theorem \ref{complex} applies
verbatim (using a regular neighborhood of $Y(\R)$ inside
$U(\R)$) to show that the inclusion $Y(\R)\to U(\R)$
is a homotopy equivalence.
\end{proof}

\section{$\mathbf{A}^1$-connectedness of the Hilbert scheme}

Hartshorne showed that the Hilbert scheme of projective
space over a field $k$ (of subschemes with a given
Hilbert polynomial) is connected \cite{HartshorneHilbert}.
In particular, $\Hilb_d(\P^n)$ is connected for every $n\geq 1$
and $d\geq 0$. The argument was sharpened by Reeves and Pardue
\cite{Reeves, Pardue}. Reeves and Pardue showed that
for an infinite field $k$, any two $k$-points of $\Hilb_d(\P^n)$
can be connected by a chain of affine lines over $k$.
By Morel's results (Lemma \ref{connected} below),
it follows that $\Hilb_d(\P^n_k)$
is $\mathbf{A}^1$-connected for $k$ infinite.

We now show that $\Hilb_d(\mathbf{A}^n)$ and $\Hilb_d(\mathbf{A}^n,0)$
are $\mathbf{A}^1$-connected over an infinite field $k$. This seems to be harder
for $\Hilb_d(\mathbf{A}^n,0)$, because
(for $d>1$) this space contains no smooth
subschemes of $\mathbf{A}^n$. When $n\geq d$, the $\mathbf{A}^1$-connectedness
of these Hilbert schemes
can be proved using the ideas of \cite{HJNTY},
but here we want the results for all $n$ and $d$.

\begin{theorem}
\label{hilbconnected}
Let $k$ be an infinite field, $n\geq 1$, $d\geq 0$.
Then $\Hilb_d(\mathbf{A}^n)$ is $\mathbf{A}^1$-connected over $k$.
\end{theorem}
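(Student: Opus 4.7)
The plan is to verify the chain-connectedness criterion of Lemma~\ref{connected} below: given any two $k$-points of $\Hilb_d(\mathbf{A}^n)$, to construct a finite chain of morphisms $\mathbf{A}^1_k\to\Hilb_d(\mathbf{A}^n)$ connecting them. I will mirror Reeves and Pardue's strategy for $\Hilb_d(\P^n)$, but execute it entirely inside $k[x_1,\dots,x_n]$, so that every flat family is a flat family of ideals in $k[x_1,\dots,x_n]$ and the corresponding morphisms land in $\Hilb_d(\mathbf{A}^n)$ rather than escaping into the locus of subschemes of $\P^n$ meeting the hyperplane at infinity.

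Let $Z\in\Hilb_d(\mathbf{A}^n)(k)$ correspond to an ideal $I\subset k[x_1,\dots,x_n]$ of colength $d$. I first apply a generic linear change of coordinates $g\in\mathrm{SL}_n(k)$: because $k$ is infinite and $\mathrm{SL}_n$ is generated by the elementary unipotent one-parameter subgroups $t\mapsto\mathrm{id}+tE_{ij}$ (each parametrized by $\mathbf{A}^1$), this produces an $\mathbf{A}^1$-chain in $\Hilb_d(\mathbf{A}^n)$ from $I$ to $gI$. I then Gröbner-degenerate $gI$ with respect to a fixed monomial order on $k[x_1,\dots,x_n]$, say the lexicographic order: the corresponding Gröbner/Rees family is a flat family of ideals in $k[s,x_1,\dots,x_n]$ over $\mathbf{A}^1_s$, with generic fiber $gI$ and with special fiber the generic initial ideal $\mathrm{gin}(I)$, a Borel-fixed monomial ideal of colength $d$ in $k[x_1,\dots,x_n]$. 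These two $\mathbf{A}^1$-chains reduce the problem to connecting any two Borel-fixed monomial ideals of colength $d$ in $k[x_1,\dots,x_n]$.

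To complete the proof one must connect all such Borel-fixed monomial ideals to a single reference (for example, the lex-segment monomial ideal of colength $d$) by $\mathbf{A}^1$-chains inside $\Hilb_d(\mathbf{A}^n)$, and this is where I expect the main difficulty. Reeves and Pardue's original combinatorial reduction is formulated in $k[x_0,\dots,x_n]$ for ideals with a given Hilbert polynomial; one must check that the analogous chain of moves between colength-$d$ Borel-fixed monomial ideals can be realized by flat families of ideals in $k[x_1,\dots,x_n]$, without appealing to a homogenizing variable $x_0$. Granting this, concatenating all of the $\mathbf{A}^1$-chains above gives chain-connectedness of $\Hilb_d(\mathbf{A}^n)(k)$, and Lemma~\ref{connected} then yields the $\mathbf{A}^1$-connectedness of $\Hilb_d(\mathbf{A}^n)$.
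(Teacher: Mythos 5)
Your proposal correctly reduces, via Lemma~\ref{connected} and a Gr\"obner degeneration, to connecting monomial ideals of colength $d$ in $k[x_1,\ldots,x_n]$ by chains of affine lines inside $\Hilb_d(\mathbf{A}^n)$. But the step you flag as ``the main difficulty'' --- adapting the Reeves--Pardue combinatorial moves between Borel-fixed monomial ideals so that they stay in $\Hilb_d(\mathbf{A}^n)$ --- is in fact left entirely unproven, and it is the heart of the matter. Writing ``granting this'' is not a proof; the Reeves--Pardue argument is formulated for saturated homogeneous ideals in $k[x_0,\ldots,x_n]$ with a fixed Hilbert polynomial, and it is not obvious how to transport those chains of linear degenerations into the affine Hilbert scheme while keeping every fiber a finite subscheme of $\mathbf{A}^n$. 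The generic-change-of-coordinates step via $\mathrm{SL}_n$-unipotents is fine but unnecessary, and introduces Borel-fixedness, which only makes the missing combinatorial step harder to supply.

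The paper avoids this issue entirely. After degenerating a given $k$-point $S$ to a monomial ideal $S_2$ supported at the origin (using the $\mathbf{G}_m$-action with weights $(c,c^2,\ldots,c^n)$ for $c\gg 0$, which guarantees a monomial limit ideal), it does \emph{not} try to connect monomial ideals to each other. Instead it uses Hartshorne's ``distraction'' (the explicit family from \cite{Cartwright}): replace each monomial generator $x^{M_i}$ by $\prod_j(x_j-a_0)\cdots(x_j-a_{M_{ij}-1})$, giving a flat family over $\mathbf{A}^d$ through $S_2$ whose fibers over distinct parameters $a_0,\ldots,a_{d-1}\in k$ are $d$ reduced $k$-points. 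Since $k$ is infinite such parameters exist, and this connects $S_2$ by a single affine line to a configuration of $d$ distinct $k$-points; connecting any two such configurations by affine lines is then elementary for $n\ge 2$ (and trivial for $n=1$, where $\Hilb_d(\mathbf{A}^1)\cong\mathbf{A}^d$). You would do well to replace the missing Reeves--Pardue adaptation with this smoothing step.
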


\begin{proof}
We use the following result of Morel's:

\begin{lemma}
\label{connected}
Let $X$ be a separated scheme of finite type over a field $k$
such that $X$ has a $k$-point.
Suppose that for every separable finitely generated field
extension $F$ of $k$, any two $F$-points of $X$ can be connected
by a chain of affine lines $\mathbf{A}^1_F\to X_F$. Then $X$
is $\mathbf{A}^1$-connected.
\end{lemma}

\begin{proof}
For $m\geq 0$,
Morel showed that an $\mathbf{A}^1$-local pointed simplicial Nisnevich sheaf $X$
over $k$ is $m$-connected if and only if the fiber $X(F)$ is $m$-connected
for every separable finitely generated field extension $F$ of $k$
\cite[Lemma 6.1.3]{Morel-connectivity}. Also, for a simplicial sheaf $X$,
$\pi_0(X)\to\pi_0^{\mathbf{A}^1}(X)$ is a surjection of Nisnevich sheaves
\cite[Section 2, Corollary 3.22]{MV}. In particular, for a separated
scheme $X$ of finite type
over $k$, $X(F)\to \pi_0^{\mathbf{A}^1}(X)(F)$ is surjective for every
separable finitely generated field extension $F$ over $k$. This implies
the lemma.
\end{proof}

By Lemma \ref{connected},
it suffices to show that for an infinite field $k$,
any two $k$-points of $U:=\Hilb_d(\mathbf{A}^n)$ can be connected
by a chain of affine lines $\mathbf{A}^1_k\to U_k$.
So let $S$ be any $k$-point of $U$. That is,
$S$ is a closed subscheme of $\mathbf{A}^n$ over $k$ of dimension zero
and degree $d$. We use a ``Gr\"obner degeneration'',
as follows. Let $c$ be a large positive integer, and consider
the action of $T:=\mathbf{G}_m$ on $\mathbf{A}^n$ by
$$t(x_1,\ldots,x_n)=(t^cx_1,t^{c^2}x_2,\ldots,t^{c^n}x_n).$$
Then $S_2:=\lim_{t\to 0}t(S)$ exists in $U$.  It is a closed
subscheme supported at the origin in $\mathbf{A}^n$,
and it is fixed by this $T$-action. That is, the defining
ideal $I$ of $S_2$ as a subscheme of $\mathbf{A}^n_k$
is homogeneous with respect to the weights
$(c,c^2,\ldots,c^n)$ on $x_1,\ldots,x_n$. Taking $c$ big enough
compared to $d$ and $n$, it follows that $I$ is generated by monomials.
By construction, we can connect $S$ to $S_2$ by an affine line over $k$.

Since $S_2$ has dimension 0 and is defined by monomials, it is smoothable,
using Hartshorne's {\it proof by distraction}; a specific reference
is \cite[Proposition 4.15]{Cartwright}. We need the more precise
information given by the proof, as follows.
Let $I=(x^{M_1},\ldots,x^{M_r})$ be the minimal set of monomial
generators for the ideal $I$. We use
multi-index notation, so $x^{M_i}=\prod_{j=1}^n x_j^{M_{ij}}$.
Consider the following flat family
of ideals in $k[x_1,\ldots,x_n]$
parametrized by affine space $A^d$: for a point
$(a_0,\ldots,a_{d-1})$ in $A^d$, take the ideal $J_a$ in $k[x_1,\ldots,x_n]$
generated by the elements
$$f_i:=\prod_{j=1}^n (x_j-a_0)(x_j-a_1)\cdots (x_j-a_{M_{ij}-1}).$$
The initial ideal of $J_a$ (with respect to any monomial order
compatible with the grading, say the graded reverse lexicographic order)
is $I$; so we have a flat family.
This defines a morphism $A^d\to \Hilb_d(\mathbf{A}^n)$ over $k$,
with the origin
mapping to the given monomial scheme $S_2$. When $a_0,\ldots,a_{d-1}$
are distinct elements of $k$, the subscheme $Z_a$ of $\mathbf{A}^n$ defined by
$J_a$ contains $d$ distinct $k$-points: namely, for each of the $d$
monomials $x^L$ not in $I$, $Z_a$ contains the $k$-point
$(a_{L_1},\ldots,a_{L_n})$. Since the scheme $Z_a$ has degree $d$,
it must be smooth over $k$, equal to those $d$ $k$-points in $\mathbf{A}^n$.

Since $k$ is infinite, it follows that we can connect $S_2$ by an affine
line in $\Hilb_d(\mathbf{A}^n)$ to a scheme $S_3$ which consists
of $d$ distinct $k$-points in $\mathbf{A}^n$. If $n\geq 2$, since the condition
for two points to be equal in $\mathbf{A}^n$ has codimension at least 2,
it is easy to connect $S_3$ by a chain of affine lines over $k$
to a fixed arrangement $S_4$ of $d$ distinct $k$-points in $\mathbf{A}^n$.
Thus $\Hilb_d(\mathbf{A}^n)$ is $\mathbf{A}^1$-connected when $n\geq 2$.
It is also $\mathbf{A}^1$-connected
when $n=1$, since $\Hilb_d(\mathbf{A}^1)\cong A^d$.
\end{proof}

\begin{theorem}
\label{hilbzeroconnected}
Let $k$ be an infinite field, $n\geq 1$, $d\geq 0$.
Then $\Hilb_d(\mathbf{A}^n,0)$ is $\mathbf{A}^1$-connected over $k$.
\end{theorem}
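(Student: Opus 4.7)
My plan is to mirror the strategy of Theorem \ref{hilbconnected} but with modifications to stay inside $\Hilb_d(\mathbf{A}^n,0)$. By Lemma \ref{connected}, it is enough to show that for every separable finitely generated field extension $F/k$, any two $F$-points of $\Hilb_d(\mathbf{A}^n,0)$ can be joined by a chain of affine lines $\mathbf{A}^1_F\to\Hilb_d(\mathbf{A}^n,0)_F$. The first step carries over verbatim: let $S\in \Hilb_d(\mathbf{A}^n,0)(F)$, and apply the Gr\"obner degeneration given by the $T$-action $t(x_1,\ldots,x_n)=(t^c x_1,\ldots,t^{c^n}x_n)$ with $c$ large compared to $d$ and $n$. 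Because this $T$-action fixes the origin of $\mathbf{A}^n$, the whole family $t\mapsto t(S)$ remains inside $\Hilb_d(\mathbf{A}^n,0)$, so we obtain an affine line from $S$ to a monomial ideal $S_2$, generated by monomials, still supported at the origin.

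The next step is where the proof of Theorem \ref{hilbconnected} cannot be reused: the distraction trick moves the support off the origin. So I would instead aim to connect every monomial Artinian ideal $S_2$ of colength $d$ to a fixed ``canonical'' monomial ideal in $\Hilb_d(\mathbf{A}^n,0)$, say the curvilinear ideal $S_\ast=(x_2,x_3,\ldots,x_n,x_1^d)$, by chains of affine lines that remain in $\Hilb_d(\mathbf{A}^n,0)$. The main available tools are (a) the $\GL(n)$-action on $\mathbf{A}^n$, which preserves the origin and therefore preserves $\Hilb_d(\mathbf{A}^n,0)$; since $\GL(n)$ is generated by one-parameter subgroups ($\mathbf{A}^1$-unipotents and $\mathbf{G}_m$-diagonals) and is rational, each $\GL(n)$-orbit in $\Hilb_d(\mathbf{A}^n,0)(F)$ can be traversed by a chain of affine lines; and (b) Gr\"obner degenerations for arbitrary positive weight systems, which again fix the origin and so yield affine lines inside $\Hilb_d(\mathbf{A}^n,0)$.

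The plan is to interleave these two operations. Applying a generic $g\in\GL(n)(F)$ to $S_2$ and then Gr\"obner-degenerating with respect to, say, the graded reverse lexicographic order gives (by Galligo's theorem on generic initial ideals) a Borel-fixed monomial ideal, which is considerably more rigid. One then tries to connect any two Borel-fixed Artinian monomial ideals of colength $d$ to $S_\ast$ using explicit one-parameter families that deform a minimal generator $x^M$ of such an ideal to $x^M+t\cdot p$ for a carefully chosen polynomial $p$, chosen so that the resulting ideal is flat of degree $d$ and its zero locus stays set-theoretically at the origin. Iterating such moves, one should be able to reduce the Hilbert function monotonically (in the partial order on Hilbert functions refining lex) until the lex-segment, i.e.\ $S_\ast$, is reached.

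The hard part will be constructing those one-parameter families explicitly and verifying that they remain in $\Hilb_d(\mathbf{A}^n,0)$. Unlike the smooth case, where distraction gives a straightforward affine-space family with generic fiber a union of distinct points, here one needs partial or ``nilpotent'' distractions — essentially flat deformations whose closed fibers all have radical $(x_1,\ldots,x_n)$. The challenge is combinatorial: identify a sufficient supply of such moves that their composition connects every Borel-fixed monomial Artinian ideal of colength $d$ in $n$ variables to the single ideal $S_\ast$. For $n=1$ the space $\Hilb_d(\mathbf{A}^1,0)$ is a single point, and for $n\geq 2$ one expects that the Borel condition restricts the combinatorics sufficiently to make the explicit construction tractable.
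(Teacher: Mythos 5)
Your proposal does not complete the proof; the crucial step is left as a hope rather than an argument. You correctly note that the distraction trick from Theorem \ref{hilbconnected} fails here because it moves the support off the origin, and your idea of using generic initial ideals plus $\GL(n)$-moves to reduce to Borel-fixed monomial ideals is reasonable as far as it goes. But the heart of the matter --- connecting an arbitrary Borel-fixed Artinian monomial ideal of colength $d$ to the curvilinear ideal $S_\ast$ by chains of affine lines staying inside $\Hilb_d(\mathbf{A}^n,0)$ --- is exactly where you write ``one should be able to reduce the Hilbert function monotonically'' and ``the challenge is combinatorial.'' That is not a proof; you have identified the hard part and stated a hope. There is no reason offered why the proposed ``nilpotent distractions'' exist in sufficient supply, or why iterating them terminates at $S_\ast$. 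The combinatorics of Artinian monomial ideals (even Borel-fixed ones) is genuinely complicated, and the Hilbert--Samuel stratification of $\Hilb_d(\mathbf{A}^n,0)$ does not obviously carry a monotone reduction of the kind you describe.

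The paper explicitly avoids this difficulty. It says ``For lack of a direct proof, we will reduce this to Theorem \ref{hilbconnected},'' and then uses the $T$-action to \emph{transport} chains of affine lines from $U=\Hilb_d(\mathbf{A}^n)$ into $Y=\Hilb_d(\mathbf{A}^n,0)$. Concretely: any two $T$-fixed points $p,q\in Y^T(k)$ are already connected by a chain of affine lines in $U$ by Theorem \ref{hilbconnected}. Given one such line $f\colon\mathbf{A}^1\to U$, the map $s\mapsto g(s):=\lim_{t\to 0}t\,f(s)$ is a rational map to $Y$ that becomes a genuine morphism $\mathbf{A}^1\to Y$ by properness of $Y$; this connects the $T$-limits of $f(s_1),f(s_2)$ for $s_1,s_2$ outside a finite indeterminacy set $Z$. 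At the finitely many bad points $s_0\in Z$, the limit 1-cycle is a broken trajectory (Proposition \ref{broken}), and the downward-closedness of $Y$ under $\lim_{t\to\infty}$ forces all the connecting $T$-orbit closures to lie in $Y$; these closures are affine lines over $k$ linking $\lim_{t\to 0}t\,f(s_0)$ to $g(s_0)$. This is a genuinely different --- and crucially, complete --- argument; your route, if it could be carried out, would be more explicit and self-contained, but as written it has a real gap at the key combinatorial step.
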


\begin{proof}
By Lemma \ref{connected}, it suffices
to show that for every infinite field $k$, any two
$k$-points of $Y:=\Hilb_d(\mathbf{A}^n,0)$ can be connected by a chain
of affine lines over $k$. For lack of a direct proof,
we will reduce this to Theorem \ref{hilbconnected}.

Let $X=\Hilb_d(\P^n)$,
$U=\Hilb_d(\mathbf{A}^n)$, and $T=\mathbf{G}_m$. Consider the action of $T$ on $X$
coming from the action of $T$ by scaling on $\mathbf{A}^n$. Then $Y$ is a $T$-invariant
closed subset of $U$, and
$\lim_{t\to 0}t(x)$ exists in $Y$ for each point $x$ in $U$.
Clearly we can connect any $k$-point $x$ in $Y$ to this limit point
by an affine line in $Y$, and the limit point is fixed by $T$.
So it suffices to show that any two $k$-points $p,q$ in $Y^T$
can be connected by a chain of affine lines in $Y$.

We know by the proof of Theorem \ref{hilbconnected}
that $p$ and $q$ can be connected by a chain of affine
lines in $U$. So it suffices to show that for any morphism
$f\colon \mathbf{A}^1\to U$ over $k$, we can connect $\lim_{t\to 0}t(f(0))$
to $\lim_{t\to 0}t(f(1))$ by a chain of affine lines in $Y$.

Composing $f$ with the action of $T$ on $U$ gives a morphism
$T\times \mathbf{A}^1\to U$ over $k$, which can be viewed as a rational map
$\P^1\times \P^1\dashrightarrow X$ over $k$.
Since $X$ is proper over $k$,
this map becomes a morphism after blowing up the domain finitely many times
at closed points. It follows that $g(s):=\lim_{t\to 0}t(f(s))$
defines a morphism $g\colon \mathbf{A}^1-Z\to Y$ for some 0-dimensional closed subset
$Z$ of $\mathbf{A}^1$. Since $Y$ is proper over $k$, $g$
extends to a morphism $g\colon \mathbf{A}^1\to Y$. As a result,
for any two $k$-points $s_1,s_2$ in $\mathbf{A}^1-Z$, $\lim_{t\to 0}t(f(s_1))$
and $\lim_{t\to 0}t(f(s_2))$ can be connected
by an affine line in $Y$.

There remains the case where $0$ or 1 is in $Z$. It suffices
to show that
for any $k$-point $s_0$ in $Z$ (which will be 0 or 1 for us),
the point $z_0:=\lim_{t\to 0}t(f(s_0))$
can be connected
by a chain of affine lines in $Y$ to $g(s_0)$.

By Proposition \ref{broken},
the $T$-orbits of the points $f(s)$ (for $s\in \mathbf{A}^1-Z)$
converge as $s$ approaches $s_0$ to a ``broken trajectory'' containing
$f(s_0)$. This means the union of $T$-orbits
of points
$y_1,\ldots,y_n$ in $X(k)$ that connect $T$-fixed points
$x_0,\ldots,x_n$ in $X(k)$, in the sense that
$\lim_{t\to 0}t(y_i)=x_{i-1}$
and $\lim_{t\to \infty}t(y_i)=x_i$.

Both the $k$-point $z_0=\lim_{t\to 0}t(f(s_0))$ and the $k$-point $g(s_0)$
lie in this union of $T$-orbit closures in $X$, and both are in
the closed subset $Y$. We know that every point $x$ in $X$
with $\lim_{t\to\infty}(tx)\in Y$ is in $Y$. Therefore,
all the orbit closures that connect $z_0$ to $g(s_0)$ are in $Y$.
So these two points can be connected by affine lines over $k$ in $Y$,
as we want.
\end{proof}

\section{Conservativity for the motivic stable homotopy category}

Extending one of Tom Bachmann's results, we prove the following
conservativity theorem, relating the motivic stable homotopy category
with the derived category of motives
along with real realizations.
Thanks for Bachmann for his suggestions.
This result will be used in the proof of Theorem \ref{smooth}.

\begin{theorem}
\label{conservativity}
Let $k$ be a finitely generated field of characteristic zero
$($that is, a finitely generated extension field of $\Q)$.
Let $A$ be a compact object in $SH(k)$ such that
$M(A)=0$ in $DM(k)$ and for every embedding of $k$ into $\R$,
$H_*(A(\R),\Z[1/2])=0$. Then $A=0$.
\end{theorem}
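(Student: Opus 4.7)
The plan is to reduce the statement to Bachmann's conservativity theorem, which asserts: if $k$ is a field such that $k(i)$ has finite virtual $2$-\'etale cohomological dimension, then the functor
$$SH(k)^c \to DM(k)^c \times \prod_{\sigma \in \Sper(k)} SH^c[\tfrac{1}{2}],$$
sending a compact motivic spectrum $A$ to its motive $M(A)$ together with the family of real realizations $\mathrm{Re}_\sigma(A)$ indexed by the orderings $\sigma$ of $k$, is conservative on compact objects.

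First I would verify the cohomological dimension hypothesis. A field $k$ finitely generated of characteristic zero has finite transcendence degree $d$ over $\Q$, so $k(i)$ is finitely generated of the same transcendence degree over $\Q(i)$. Since $\Q(i)$ has $2$-\'etale cohomological dimension at most $2$ and each pure transcendental step increases the $2$-\'etale cohomological dimension by at most $1$, the field $k(i)$ has $2$-\'etale cohomological dimension bounded by $d+2$, and $k(i)$ has no orderings so its virtual and actual $2$-cohomological dimensions agree. Hence Bachmann's theorem applies to our $k$.

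Second, I would match embeddings $k \hookrightarrow \R$ with orderings of $k$. Since $k$ is countable, every ordering $\sigma$ of $k$ is the pullback of the standard ordering on $\R$ along some order-preserving embedding $k \hookrightarrow \R$: the real closure of $(k,\sigma)$ is a countable real closed field, and any countable real closed field embeds into $\R$ as an ordered subfield. Conversely, any embedding $k \hookrightarrow \R$ induces an ordering on $k$. For a compact object $A \in SH(k)$ and any embedding $k \hookrightarrow \R$ inducing $\sigma$, one has a canonical $\Z[\tfrac{1}{2}]$-equivalence between the abstract real realization $\mathrm{Re}_\sigma(A)$ and the classical spectrum of $\R$-points $A(\R)$; so the vanishing of $H_*(A(\R),\Z[\tfrac{1}{2}])$ for every embedding translates precisely into the vanishing of $\mathrm{Re}_\sigma(A)[\tfrac{1}{2}]$ for every $\sigma \in \Sper(k)$.

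Combining these ingredients, the hypotheses $M(A)=0$ and $H_*(A(\R),\Z[\tfrac{1}{2}])=0$ for every embedding $k\hookrightarrow \R$ force $A$ to lie in the kernel of each component of a conservative family, so $A=0$. The main obstacle I anticipate is the rigorous identification of $\mathrm{Re}_\sigma(A)$ with $A(\R)$ after inverting $2$: this is a rigidity-type statement, true for $\Sigma^\infty_+$ of smooth varieties (where both sides reduce to classical real points, and invariance under an ordered embedding $R_\sigma \hookrightarrow \R$ follows from the rigidity of real \'etale sheaves with $\Z[\tfrac{1}{2}]$-coefficients going back to Scheiderer), and extending to arbitrary compact $A$ via a thick-subcategory argument in $SH(k)^c$. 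The finite-generation hypothesis on $k$ is crucial in both steps: it supplies the cohomological dimension bound needed for Bachmann's theorem, and the countability needed to realize every ordering by an embedding into $\R$.
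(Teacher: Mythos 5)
Your first step (invoking Bachmann's conservativity theorem over $\Sper(k)$) matches the paper's approach, and the cohomological-dimension verification is fine. But there is a genuine gap in the second step: it is \emph{not} true that every ordering of a countable (or finitely generated) field $k$ of characteristic zero comes from an embedding $k\hookrightarrow\R$. Only the \emph{archimedean} orderings do. A countable real closed field need not embed into $\R$ as an ordered field: for instance, take $k=\Q(t)$ with the ordering where $t$ is positive but infinitesimal (i.e.\ $0<t<1/n$ for all $n$); the real closure is a countable real closed field, but any ordered embedding into $\R$ would have to send $t$ to a positive real smaller than every $1/n$, hence to $0$, a contradiction. So for $\mathrm{trdeg}_\Q(k)\geq 1$ there are orderings of $k$ that you cannot see via real embeddings, and your argument never constrains $M_\sigma[1/2](A)$ at those $\sigma$. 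Bachmann's theorem requires vanishing at \emph{all} points of $\Sper(k)$, so you cannot conclude $A=0$ from the stated hypotheses as you have argued.

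What is needed to close the gap is a genuinely different argument, which is what the paper does. The correct statement about $\Sper(k)$ is only that the archimedean orderings are \emph{dense} (a result of Fried--Haran--V\"olklein for finitely generated $k/\Q$). Density alone does not immediately suffice, because the set of $\sigma$ with $M_\sigma[1/2](A)\neq 0$ could a priori be a nonempty closed set with empty interior. The missing ingredient is a topological rigidity statement: the assignment $\sigma\mapsto M_\sigma[1/2](A)$ is the stalk of a perfect complex on the compact Hausdorff totally disconnected space $\Sper(k)$ (via Bachmann's equivalence $DM_W(k,\Z[1/2])\simeq D(\Sper(k),\Z[1/2])$, which sends compacts to perfect complexes), and the support of a perfect complex on such a space is \emph{open} as well as closed. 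Combined with density of the archimedean orderings, vanishing of $H_*(A(\R),\Z[1/2])$ for all real embeddings forces the support to be empty. You should replace your countability argument with this density-plus-openness argument. (Your anticipated difficulty in identifying $\mathrm{Re}_\sigma(A)$ with $A(\R)$ after inverting $2$ is, by contrast, not a real obstacle: Bachmann already records this identification.)
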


If $k$ has no real embedding,
Theorem \ref{conservativity} just says
that $M(A)=0$ in $DM(k)$ implies $A=0$ in $SH(k)$.

\begin{proof}
Bachmann showed (in particular)
that if $A$ is a compact object in $SH(k)$
such that $M(A)=0$ in $DM(k)$ and for every $\sigma$
in the space $\Sper(k)$ of orderings of $k$,
$M_{\sigma}[1/2](A)=0$ in $D(\Z[1/2])$, then $A=0$
\cite[Theorem 33]{Bachmannconservativity}.
When $\sigma$
comes from an embedding of $k$ into $\R$, $M_{\sigma}[1/2](A)$
is the complex that computes the singular homology of the corresponding
real realization of $A$, $H_*(A(\R),\Z[1/2])$
\cite[Remark 1]{Bachmannconservativity}. It remains to show
that we only need to consider orderings that
come from real embeddings of $k$.

We use the following property of the space
$\Sper(k)$ of orderings of $k$ \cite[Lemma 1.6]{FHV}.
The topology on $\Sper(k)$ is defined by taking the sets
$\{\sigma: a>_{\sigma}0\}$ for $a\in k$ as a sub-basis for the topology.
This makes $\Sper(k)$ into a compact Hausdorff totally
disconnected space.

\begin{lemma}
Let $k$ be a finitely generated field of characteristic zero.
Then the set of archimedean orderings of $k$ is dense
in the topological space $\Sper(k)$ of orderings of $k$,
and every archimedean ordering comes from an embedding
of $k$ into $\R$.
\end{lemma}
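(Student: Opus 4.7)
My plan is to prove the two assertions separately. For the claim that every archimedean ordering on $k$ arises from an embedding into $\R$, I would use the classical Dedekind-cut construction: the archimedean hypothesis makes $\Q$ cofinal and dense in $(k,\sigma)$, so sending $a \in k$ to $\sup\{q \in \Q : q <_\sigma a\} \in \R$ defines an order-preserving field embedding $k \hookrightarrow \R$.

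For density, fix $\sigma \in \Sper(k)$ and elements $a_1, \ldots, a_n \in k$ with $a_i >_\sigma 0$; I must produce an archimedean $\tau$ with $a_i >_\tau 0$. First I would choose a smooth finitely generated $\Q$-subalgebra $A \subset k$ with fraction field $k$, containing $a_1^{\pm 1}, \ldots, a_n^{\pm 1}$; such an $A$ exists because in characteristic zero the smooth locus of any affine model of $k$ is open and dense, so one can localize at a suitable element positive at $\sigma$. Fixing an embedding $V := \Spec A \hookrightarrow \mathbf{A}^N_\Q$ presents $V$ as a smooth irreducible affine variety of dimension $r = \operatorname{tr.deg}_\Q k$.

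The crucial step is to use the Tarski--Seidenberg transfer principle to pass from $\sigma$ to a genuine real point of $V$. Extending $\sigma$ to an ordering on a real closure $R$ of $(k,\sigma)$, the composite $A \hookrightarrow k \hookrightarrow R$ gives a point $p \in V(R) \subset R^N$ satisfying the defining equations of $V$ and the inequalities $a_i(p) > 0$. By transfer, the same semi-algebraic system has a solution $v_0 \in \R^N$, yielding $v_0 \in V(\R)$ with $a_i(v_0) > 0$. Since $A$ is smooth over $\Q$, the set $V(\R)$ is a real manifold of dimension $r$ at $v_0$.

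Finally I would perturb $v_0$ to a ``generic'' real point. The proper closed subvarieties of $V$ defined over $\Q$ form a countable family $\{W_j\}$ (since the Noetherian ring $A$ is countable), each of dimension $<r$, so $\bigcup_j W_j(\R)$ is a meager subset of the $r$-manifold $V(\R)$ by the Baire category theorem. Hence the open set $V(\R) \cap \{a_i > 0\}$ contains a point $v$ lying on no proper $\Q$-subvariety of $V$. Evaluation at $v$ is then an injective $\Q$-algebra map $A \hookrightarrow \R$ extending uniquely to a field embedding $k \hookrightarrow \R$, and the induced ordering $\tau$ on $k$ is archimedean and has $a_i >_\tau 0$. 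The main obstacle is the Tarski--Seidenberg step, which is the standard but essential input from the model theory of real closed fields (equivalently the Artin--Lang homomorphism theorem) bridging abstract orderings to genuine real points.
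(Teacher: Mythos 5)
The paper does not give a proof of this lemma; it simply cites Fried--Haran--V\"olklein \cite[Lemma 1.6]{FHV}, whose proof runs along essentially the same lines (Hilbertianity of finitely generated fields playing the role of your genericity step). Your argument is a correct, self-contained proof. The first assertion is the standard Dedekind-cut argument: archimedean means $\Q$ is cofinal and order-dense in $(k,\sigma)$, so $a\mapsto\sup\{q\in\Q: q<_\sigma a\}$ is an order embedding of $k$ into $\R$. For density you take a smooth affine model $V=\Spec A$ of $k$ over $\Q$ containing $a_1^{\pm1},\dots,a_n^{\pm1}$, observe that the real closure of $(k,\sigma)$ gives a point of $V$ over a real closed field with $a_i>0$, transfer (Tarski--Seidenberg, equivalently Artin--Lang with sign conditions) to get a point $v_0\in V(\R)$ with $a_i(v_0)>0$, and then use that $A$ is countable together with Baire category on the smooth $r$-manifold $V(\R)$ to perturb $v_0$ to a point $v$ avoiding all proper $\Q$-subvarieties. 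This makes evaluation at $v$ injective on $A$, hence an embedding $k\hookrightarrow\R$, and the resulting archimedean ordering $\tau$ has $a_i>_\tau 0$, proving density. One small imprecision: when you choose $A$ you say to localize at a suitable element ``positive at $\sigma$''; no positivity condition is needed on the localizing element --- any nonzero $f$ will do, since only the nonvanishing locus (not the sign) matters for obtaining a smooth model with fraction field $k$. This does not affect the validity of the argument.
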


Given that, we are done if we can show that the support
in $X:=\Sper(k)$ of a compact object in $SH(k)$ is open as well as closed.
This is related to the general fact that for a tensor triangulated
category $K$, the support of an object of $K$ in the Balmer spectrum
$\Spc(K)$ is closed {\it and }its complement is quasi-compact
\cite[Proposition 2.14]{Balmer}. However,
we will argue more directly.

We use that the functors $M_{\sigma}$
come from a functor from $SH(k)$ to the derived category of sheaves
$D(X,\Z[1/2])$,
which takes compact objects to compact objects. Indeed, by
\cite[Lemma 21]{Bachmannconservativity}, the functor
from $SH(k)$ to Witt motives $DM_W(k,\Z[1/2])$ is monoidal;
so it takes rigid objects
to rigid objects, and the rigid objects
coincide with the compact objects in these categories. (Some people
say ``strongly dualizable'' rather than ``rigid''.)
Furthermore,
$DM_W(k,\Z[1/2])$ is equivalent to $D(X,\Z[1/2])$
\cite[Lemma 26 and proof of Theorem 30]{Bachmannconservativity}.

A compact object in $D(X,\Z[1/2])$ is a perfect complex;
that is, it is locally
isomorphic to a bounded complex of finitely generated
projective $\Z[1/2]$-modules. 
(Indeed, since $X$ is compact, Hausdorff,
and totally disconnected, every open subset of $X$ is a union
of clopen subsets (or equivalently, quasi-compact open subsets).
It follows that every compact object
in $D(X,\Z[1/2])$ is a summand in $D(X,\Z[1/2])$ of a bounded
complex of sheaves which are finite direct sums of sheaves
of the form $j_!(\Z[1/2]_U)$, with $j\colon U\inj X$
the inclusion of a quasi-compact open subset \cite[Lemma 094C]{Stacks}.
Clearly such a summand is a perfect complex of $\Z[1/2]$-modules
on $X$.)

Because sections of the sheaf $\Z[1/2]$ on $X=\Sper(k)$ are locally
constant, the support of a perfect complex on $X$
is open as well as closed.
\end{proof}

\section{$\mathbf{G}_m$-actions on smooth varieties
and motivic homotopy theory}

We now consider Conjecture~\ref{conj:torus} in the special case where $U$
is smooth. (One example where this applies is the inclusion
from $\Hilb_d(A^2,0)$ to $\Hilb_d(A^2)$.) For $U$ smooth,
we show that the inclusion $Y\to U$ becomes an $\mathbf{A}^1$-homotopy
equivalence after suspending by $\mathbf{S}^{3,1}=\mathbf{S}^2\wedge \mathbf{G}_m$.
It follows that $Y$ and $U$ have many invariants
in common, such as motivic homology and cohomology, $l$-adic
cohomology, and so on. On the other hand, it remains open
whether the Nisnevich sheaf $\pi_0^{\mathbf{A}^1}$ is the same for $Y$
and $U$, and likewise for $\pi_1^{\mathbf{A}^1}$. At least for $\pi_0^{\mathbf{A}^1}$,
one might hope to imitate the proof of Theorem \ref{hilbzeroconnected}.

\begin{theorem}
\label{smooth}
Under the assumptions of Conjecture
\ref{conj:torus} with base field $k$ of characteristic zero,
and assuming that $U$ is smooth over $k$,
the inclusion $Y\to U$ becomes an $\mathbf{A}^1$-homotopy equivalence
after suspending by $\mathbf{S}^{3,1}=\mathbf{S}^2\wedge \mathbf{G}_m$.
\end{theorem}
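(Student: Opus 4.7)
The plan is to invoke the conservativity theorem (Theorem~\ref{conservativity}) on the cofiber $C := \mathrm{cofib}(\Sigma^\infty Y_+ \to \Sigma^\infty U_+)$ in $SH(k)$, with the goal of showing $\Sigma^{3,1} C = 0$. Since $k$ has characteristic zero and both $Y$ and $U$ are quasi-projective, both suspension spectra are compact in $SH(k)$ (using resolution of singularities to handle the possibly singular $Y$), so $\Sigma^{3,1} C$ is compact. Theorem~\ref{conservativity} then reduces the problem to verifying (i) $M(\Sigma^{3,1} C) = 0$ in $DM(k)$, and (ii) $H_*((\Sigma^{3,1} C)(\R), \Z[1/2]) = 0$ for every embedding $k \hookrightarrow \R$.

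Condition (ii) is immediate from Theorem~\ref{real}: $\mathbf{S}^{3,1}$ realizes to $S^2$ under any real embedding (since $\mathbf{G}_m(\R)$ is homotopy equivalent to $S^0$), so this vanishing reduces to contractibility of $U(\R)/Y(\R)$, which is exactly the content of Theorem~\ref{real}. Since $\mathbf{G}_m$-suspension is an invertible Tate shift in $DM(k)$, condition (i) reduces to showing that the closed inclusion $Y \hookrightarrow U$ induces an isomorphism $M(Y) \xrightarrow{\sim} M(U)$ in $DM(k)$.

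For this motivic statement I would exploit the smoothness of $U$ via Bia\l ynicki-Birula. The fixed loci $F_i$ comprising $U^T = Y^T$ are smooth, and the attracting cells $U^+(F_i) := \{u \in U : \lim_{t\to 0} tu \in F_i\}$ are Zariski-locally trivial affine bundles over $F_i$ that partition $U$. After ordering the components so that $U_{\leq n} := \bigsqcup_{i \leq n} U^+(F_i)$ is closed in $U$, set $Y_{\leq n} := Y \cap U_{\leq n}$ and $Y_n := Y \cap U^+(F_n)$ to obtain compatible filtrations. The key cell-by-cell input is that the $T$-action on the vector bundle $U^+(F_n) \to F_n$, having only positive fiber weights, extends to a morphism $\mathbf{A}^1 \times U^+(F_n) \to U^+(F_n)$ collapsing $\{0\} \times U^+(F_n)$ onto the zero section $F_n$. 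Since $Y_n$ is closed, $T$-invariant in $U^+(F_n)$, and contains $F_n$, this extension restricts to a morphism $\mathbf{A}^1 \times Y_n \to Y_n$ that provides an $\mathbf{A}^1$-homotopy between $\mathrm{id}_{Y_n}$ and the projection onto $F_n$. Consequently both $F_n \hookrightarrow Y_n$ and $F_n \hookrightarrow U^+(F_n)$ are $\mathbf{A}^1$-homotopy equivalences, and the stratum inclusion $Y_n \hookrightarrow U^+(F_n)$ induces $M(Y_n) \xrightarrow{\sim} M(U^+(F_n))$ in $DM(k)$.

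The main obstacle is assembling these cell-by-cell motivic isomorphisms into the global statement $M(Y) \cong M(U)$. Since $Y_{\leq n}$ is typically singular, Gysin triangles for smooth closed immersions are unavailable; instead I would rely on cdh-descent in $DM(k)$ (valid in characteristic zero via resolution of singularities) to construct compatible localization triangles relating $M(Y_{\leq n-1})$, $M(Y_{\leq n})$, and $M(Y_n)$, and analogously for the filtration on $U$. A five-lemma induction on $n$ then propagates the cell-by-cell isomorphism to $M(Y_{\leq n}) \cong M(U_{\leq n})$ at every stage, yielding $M(Y) \cong M(U)$ and completing the hypotheses of Theorem~\ref{conservativity}. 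This singular gluing step in $DM(k)$ is the principal technical hurdle, and the choice of $\mathbf{S}^{3,1}$-suspension in the conclusion simply reflects the form in which the compact-object version of Bachmann's conservativity bridges the $DM$-and-real-realization data back to $SH(k)$.
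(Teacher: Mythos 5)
Your reduction to Theorem~\ref{conservativity} is the right opening move, and the cell-by-cell observation is correct and pleasant: since $Y_n := Y\cap Z_n^+$ is a reduced, $T$-invariant closed subset of the affine-space bundle $Z_n^+$ whose zero section $F_n$ lies in $Y$, the $\mathbf{A}^1$-extension of the $T$-action on $Z_n^+$ does restrict to a morphism $\mathbf{A}^1\times Y_n\to Y_n$, so both $Y_n$ and $Z_n^+$ $\mathbf{A}^1$-retract to $F_n$. But the gluing step is a genuine gap. There is no localization triangle $M(Y_{\leq n-1})\to M(Y_{\leq n})\to M(Y_n)$ for the non--compactly-supported motive when the total space is singular: the Gysin triangle needs smoothness, cdh-descent gives abstract blow-up (Mayer--Vietoris) squares rather than open--closed decompositions, and the closed pieces $Y_{\leq n}$ and $U_{\leq n}$ are singular in general. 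The paper sidesteps exactly this problem by working throughout with $M^c$, which \emph{does} have open--closed localization triangles for arbitrary finite-type schemes, applying Karpenko's result that the Bia\l ynicki-Birula filtration of the (proper, after resolution) ambient $X$ splits $M^c(X_j)$ into a sum of $M(Z_i)\{b_i\}$, and then converting $M^c(U)$ to $M(U)$ in a single stroke via Poincar\'e duality for the smooth $U$ (with $M(Y)=M^c(Y)$ automatic because $Y$ is proper). Your $\mathbf{A}^1$-contraction argument computes $M$, not $M^c$, of each stratum, and since the singular $Y_n$ has no duality available, you cannot convert one to the other; so the pieces you control and the gluing you have available do not match.

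The second gap is at the end: even granting $M(Y)\xrightarrow{\sim} M(U)$ and the real vanishing, Theorem~\ref{conservativity} only yields that the cofiber is zero in $SH(k)$, i.e.\ that $\Sigma^\infty_{\P^1} Y_+\to\Sigma^\infty_{\P^1} U_+$ is a \emph{stable} equivalence. The theorem you are proving is unstable: it asserts an isomorphism in $H(k)_*$ after a single smash with $\mathbf{S}^{3,1}$. Passing from the stable statement back to $H(k)_*$ is precisely where Bachmann's conservativity of $\Sigma^\infty_{\P^1}$ on $\mathbf{A}^1$-simply connected spaces of the form $(\text{colim})\wedge\mathbf{G}_m$ is invoked in the paper, and $\mathbf{S}^{3,1}=\mathbf{S}^2\wedge\mathbf{G}_m$ is chosen exactly so that $\mathbf{S}^2$ provides simple-connectedness and $\mathbf{G}_m$ provides the required form. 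Your closing remark that the $\mathbf{S}^{3,1}$ ``simply reflects the form in which the compact-object version of Bachmann's conservativity bridges $DM$ and real realizations back to $SH(k)$'' misidentifies the role of the suspension: Theorem~\ref{conservativity} itself has nothing to do with the $\mathbf{S}^{3,1}$, and without the separate unstable-conservativity input your argument stops one category short of the claim.
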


\begin{proof}
We can assume that $U$ is connected, by arguing separately
for each connected component of $U$.
Next, by equivariant resolution of singularities (using that $k$
has characteristic zero), we can assume that $X$
(as well as $U$) is smooth over $k$, while still having a $T$-action
\cite[Proposition 3.9.1]{Kollarres}.

We first show that the inclusion $Y\to U$ induces an isomorphism
in the derived category of motives $DM(k)$, $M(Y)\to M(U)$.
Namely, since $X$ is smooth over $k$, we have the Bia\l ynicki-Birula
decomposition, as follows. The fixed point set $X^T$
is smooth over $k$. Write $Z_1,\ldots,Z_m$ for the connected
components of $X^T$.
For each $i$, let $Z_i^+=\{x\in X: \lim_{t\to 0}tx\in Z_i\}$
and $Z_i^-=\{x\in X: \lim_{t\to \infty}tx\in Z_i\}$ be the stable
and unstable manifolds of $Z_i$. Then the action of $T$ gives
morphisms $Z_i^+\to Z_i$ and $Z_i^-\to Z_i$ which are affine-space
bundles \cite{BB}.

Karpenko showed that this geometric decomposition
gives a direct-sum
decomposition of Chow motives over $k$ \cite[Theorem 6.5]{Karpenko},
\cite[Theorem 3.5]{Brosnan}:
$$M(X)\cong \oplus_{i=1}^m M(Z_i)\{a_i\},$$
where $a_i:=\dim(Z_i^+)-\dim(Z_i)$. (Here $\Z\{1\}$ denotes the Lefschetz
motive, with $M(\P^1)=\Z\{0\}\oplus\Z\{1\}$.)
This implies another decomposition $M(X)\cong
\oplus_{i=1}^m M(Z_i)\{b_i\}$,
where $b_i:=\dim(Z_i^-)-\dim(Z_i)$, by inverting the $T$-action on $X$.
Here
$$a_i+\dim(Z_i)+b_i=n,$$
by considering the action on $T$
on the tangent space to $X$ at a point of $Z_i$.

The category of Chow motives is a full subcategory
of the derived category of motives, $DM(k)$: the thick subcategory
generated by smooth projective schemes over $k$ tensored with 
$\Z\{a\}=\Z(a)[2a]$
for integers $a$ \cite{Voevodskytri}. 
Every scheme $X$ of finite type over $k$
has a motive $M(X)$ and a compactly supported motive $M^c(X)$
in $DM(k)$. We can assume that $Z_1,\ldots,Z_m$ are ordered
in such a way that the closure of $Z_i^-$ is contained
in $X_i:=\cup_{j\leq i}Z_j^-$. Karpenko's argument shows that
the exact triangle
$$M^c(X_{i-1})\to M^c(X_i)\to M^c(Z_i^-)\cong M(Z_i)\{b_i\}$$
in $DM(k)$ is split \cite[Theorem 6.5, part (a)]{Karpenko}.
(Indeed, his splitting on Chow groups is defined by an element
of $CH_{\dim(Z_i)+b_i}(Z_i\times X_i)$, and that is precisely
$\Hom(M(Z_i)\{b_i\},M^c(X_i))$ since $Z_i$ is smooth and proper over $k$.)
In particular, it follows that
$$M(X_j)\cong \oplus_{i=1}^j M(Z_i)\{b_i\}$$
for each $1\leq j\leq m$, and its open complement
$X-X_j$ satisfies
$$M^c(X-X_j)\cong \oplus_{i=j+1}^m M(Z_i)\{b_i\}$$
(Here $X_j$ need not be smooth,
but it is proper over $k$, and so its motive $M(X_j)$
is the same as its compactly supported motive $M^c(X_j)$.)

In the notation of Conjecture \ref{conj:torus},
we can assume that the closed subset $Y$ of $X$ is equal to $X_r$
for some $r\leq m$.
So $M(Y)\cong \oplus_{i=1}^r M(Z_i)\{b_i\}$.
Likewise, the open subset $U$ is the union of the subsets
$Z_i^+$ with $i\leq r$. By the splitting in $DM(k)$ above, applied
to the inverse action of $T$ on $X$,
we have
$$M^c(U)\cong \oplus_{i=1}^r M(Z_i)\{a_i\}.$$
Since $U$ is smooth of dimension $n$ over $k$, it follows
that
\begin{align*}
M(U)&\cong M^c(U)^*\{n\}\\
&\cong \oplus_{i=1}^r M(Z_i)^*\{n-a_i\}\\
&\cong \oplus_{i=1}^r M(Z_i)\{n-a_i-\dim(Z_i)\}\\
&\cong \oplus_{i=1}^r M(Z_i)\{b_i\}.
\end{align*}

Thus $M(Y)$ is isomorphic to $M(U)$ in $DM(k)$. More precisely,
the inclusion $Y\to U$ induces an isomorphism $M(Y)\to M(U)$.
To see this, one checks from Karpenko's construction
of the splittings that for $i,j\in \{1,\ldots,r\}$,
the composition $M(Z_i)\{b_i\}\to M(Y)\to M(U)\to M(Z_j)\{b_j\}$
is the identity for $i=j$ and zero if $i<j$.

The schemes $Y,U,X$ with $T$-action are defined over 
some finitely generated subfield of $k$.
So we can assume
that the field $k$ is finitely generated over $\Q$.
Apply Theorem \ref{conservativity}
to the cofiber $A=\Sigma^{\infty}(U/Y)$ in $SH(k)$.
We showed above that the motive of $A$ in $DM(k)$ is zero.
Also, for every real embedding of $k$, the real realization
of $A$ is zero in the stable homotopy category, by Theorem
\ref{real}. (It may be that $k$ has no real embedding.)
Therefore, $A=0$ in $SH(k)$.
That is, the inclusion $Y\to U$ induces an isomorphism
in $SH(k)$.

Again using that $k$ has characteristic
zero, Bachmann showed that the $\P^1$-suspension functor
$$Q=\Sigma^{\infty}_{\P^1}\colon H(k)_* \to SH(k)$$
is conservative on $\mathbf{A}^1$-simply connected spaces which can be written as homotopy colimits
of spaces $X_+\wedge \mathbf{G}_m$ with $X\in \Sm_k$
\cite[Theorem 1.3]{BachmannP1}. 

The $\mathbf{S}^2$-suspension of every space in $H(k)_*$
is $\mathbf{A}^1$-simply connected. The map $\mathbf{S}^2\wedge \mathbf{G}_m\wedge Y_+\to \mathbf{S}^2\wedge \mathbf{G}_m\wedge U_+$
is therefore a pointed $\mathbf{A}^1$-homotopy equivalence.
\end{proof}

Theorem \ref{smooth} can be slightly strengthened
if in addition $Y$ and $U$
are $\mathbf{A}^1$-connected. In that case, their $\mathbf{S}^1$-suspensions
are $\mathbf{A}^1$-simply connected, and so $\P^1\wedge Y_+\to \P^1\wedge U_+$
is a pointed $\mathbf{A}^1$-homotopy equivalence, using that $\P^1=\mathbf{S}^{2,1}
=\mathbf{S}^1\wedge \mathbf{G}_m$.

\end{document}